\renewcommand{\le}{\varleq}
\renewcommand{\ge}{\vargeq}
\renewcommand{\theenumi}{\thesection.\arabic{enumi}}
\newcommand{\red}{\color{red}}
\newcommand\mL{L\kern-0.08cm\char39}
\newcommand{\myforall}{\text{ for all }}
\newcommand{\mywith}{\text{ with }}
\newcommand{\myand}{\text{ and }}
\newcommand{\myif}{\text{ if }}
\newcommand{\mythen}{\text{ then }}
\newcommand{\seb}{\{\,}
\newcommand{\sen}{\,\}}
\newcommand{\getsby}[1]{\xleftarrow{#1}}
\newcommand{\infd}{$(\to)$-depth\ }
\newcommand{\infde}{$(\to)$-depth}
\newcommand{\hx}{\hat{x}}
\newcommand{\hy}{\hat{y}}
\newcommand{\hX}{\hat{X}}
\newcommand{\tesgh}{edge-surjective graph homomorphism}
\newcommand{\pdirectional}{\raise0.05em\hbox{$+$}directional}
\newcommand{\bidirectional}{bidirectional}
\newcommand{\Z}{\mathbb{Z}}
\newcommand{\Nonne}{\mathbb{N}}
\newcommand{\Posint}{\mathbb{N} \setminus \{ 0 \}}
\newcommand{\bi}{\in \Z}
\newcommand{\beposint}{\in \Posint}
\newcommand{\bpi}{\ge 1} 
\newcommand{\benonne}{\in \Nonne}
\newcommand{\bni}{\ge 0} 
\newcommand{\num}[1]{\hash {#1} }
\newcommand{\Decomp}{\mathscr{D}}
\newcommand{\Gcal}{\mathcal{G}}
\newcommand{\Ucal}{\mathcal{U}}
\newcommand{\kuu}{\emptyset}
\newcommand{\nekuu}{\neq \kuu}
\newcommand{\fai}{\varphi}
\newcommand{\barc}{\bar{c}}
\newcommand{\barC}{\bar{C}}
\newcommand{\barX}{\bar{X}}
\newcommand{\ddC}{\ddot{C}}
\newcommand{\ddX}{\ddot{X}}
\newcommand{\enumb}{\begin{enumerate}}
\newcommand{\enumn}{\end{enumerate}}
\newcommand{\itemb}{\begin{itemize}}
\newcommand{\itemn}{\end{itemize}}
\renewcommand{\theenumi}{(\alph{enumi})}
\renewcommand{\labelenumi}{\theenumi}
\newtheorem{thm}{Theorem}[section]
\newtheorem{lem}[thm]{Lemma}
\newtheorem{prop}[thm]{Proposition}
\newtheorem{cor}[thm]{Corollary}
\theoremstyle{definition}
\newtheorem{defn}[thm]{Definition}
\theoremstyle{remark}
\newtheorem{nota}[thm]{Notation}
\newtheorem{rem}[thm]{Remark}
\numberwithin{equation}{section}
\newcommand{\Vp}{V \setminus V_0}
\newcommand{\ft}{$f$-tower}
\newcommand{\KR}{Kakutani--Rohlin}
\newcommand{\KRD}{\KR\ decomposition}
\newcommand{\AP}[1]{{\rm AP(#1)}}
\newcommand{\NAP}[1]{#1 \setminus \AP{#1}}
\begin{document}

\title[Non-homeomorphic topological rank]
{Non-homeomorphic topological rank and expansiveness}

\author{TAKASHI SHIMOMURA}

\address{Nagoya University of Economics, Uchikubo 61-1, Inuyama 484-8504, Japan}
\curraddr{}
\email{tkshimo@nagoya-ku.ac.jp}
\thanks{}

\subjclass[2010]{Primary 37B05, 54H20.}

\keywords{covering, zero-dimensional, Bratteli diagram, topological rank, expansive}

\date{\today}

\dedicatory{}

\commby{}

\begin{abstract}
Downarowicz and Maass (2008) have shown that every Cantor minimal homeomorphism
 with finite topological rank $K > 1$ is expansive.
Bezuglyi, Kwiatkowski and Medynets (2009) extended the result to non-minimal cases.
On the other hand, Gambaudo and Martens (2006) had expressed all Cantor minimal
 continuou surjections as the inverse limit of graph coverings.
In this paper, we define a topological rank for every Cantor minimal continuous
 surjection, and show that every Cantor minimal continuous
 surjection of finite topological rank has the natural extension that is expansive.
\end{abstract}

\maketitle
\section{Introduction}
By a {\it zero-dimensional system}, we mean a pair $(X,f)$
 of a compact zero-dimensional metrizable space $X$,
 and a continuous surjective map $f : X \to X$.
In \cite{Shimomura4}, we showed that every zero-dimensional sytem is expressed 
 as an inverse limit of a sequence of covers of finite directed graphs.
In this paper, instead of the term a `sequence of graph covers', we use the term
 a `graph covering' or just a `covering' in short.
On the other hand, for Cantor minimal homeomorphisms,
 properly ordered Bratteli diagrams have been
 used for deep investigations.
In \cite{DM}, Downarowicz and Maass presented a remarkable theorem that states that
every Cantor minimal system of finite topological rank $K > 1$ is expansive.
They used the properly ordered Bratteli diagrams with astonishing technique.
In \cite{BKM}, Bezuglyi, Kwiatkowski and Medynets extended the result
 to non-minimal cases.
In this paper, we want to extend the result of Downarowicz and Maass to non-homeomorphic case.
As for general Cantor minimal continuous surjections, Gambaudo and Martens in \cite{GM},
 presented a way of expressing every Cantor minimal
 continuous surjection by a kind of graph coverings.
Hereafter, we mention the coverings that Gambaudo and Martens gave in \cite{GM}
 as {\it GM-coverings}
 (see Definition \ref{defn:GM-covering} for details).
Applying the proof of \cite{DM} and GM-coverings, we show an analogous theorem
 for Cantor minimal continuous surjections.
We can define the topological rank for a Cantor minimal continuous surjection
 using GM-coverings.
Here, because we have extended the kind of coverings, logically, the topological rank
 may decrease from that of Downarowicz and Maass \cite{DM}.
Nevertheless, a simple observation shows that the topological rank is the same
 for Cantor minimal homeomorphisms (see Corollary \ref{cor:coincidence-rank}).
Our main result is as follows:
 for a Cantor minimal continuous surjection that has topological rank
 $K > 1$,
 the natural extension is expansive.
\section{Preliminaries}
Let $\Z$ denote the set of all integers;
 and $\Nonne$, the set of all non-negative integers.
In this section,
 we repeat the construction of general graph coverings
 for general zero-dimensional systems originally given in \S 3 of \cite{Shimomura4}.
We describe some notations for later use.
For $m \ge n$, we denote as $[n,m] := \seb n, n+1,\dotsc, m\sen$.
A pair $G = (V,E)$
 consisting of a finite set $V$ and a relation $E \subseteq V \times V$ on $V$
 can be considered as a directed graph with vertices $V$
 and an edge from $u$ to $v$ when $(u,v) \in E$.
Unlike Bratteli diagrams that are well known and defined in \S \ref{sec:bratteli}, multiple edges from a vertex $u$ to $v$ is not permitted.
We note here that
 the expression $(V,E)$ is also used to mean a Bratteli diagram in this paper.
If we write a ``graph $G$'', ``graph $G = (V,E)$'' or a ``surjective directed graph $G = (V,E)$'',
 we mean a finite directed graph.
When the expression $(V,E)$ mean a Bratteli diagram, we explicitly write as a ``Bratteli diagram $(V,E)$''.
%
%
\begin{nota}
In this paper, we assume that a finite directed graph $G$ is a surjective relation,
 i.e., for every vertex $v \in V$ there exist edges $(u_1,v),(v,u_2) \in E$.
\end{nota}
For directed graphs $G_i = (V_i,E_i)$ with $i = 1,2$,
 a map $\fai : V_1 \to V_2$ is said to be a {\it graph homomorphism}
 if for every edge $(u,v) \in E_1$, it follows that $(\fai(u),\fai(v)) \in E_2$.
In this case, we write as $\fai : G_1 \to G_2$.
For a graph homomorphism $\fai : G_1 \to G_2$, we say that $\fai$ is {\it edge-surjective}
if $\fai(E_1) = E_2$.
%
%
%
%
%
Suppose that a graph homomorphism $\fai : G_1 \to G_2$ satisfies the following condition:
\[(u,v),(u,v') \in E_1 \text{ implies that } \fai(v) = \fai(v').\]
In this case, $\fai$ is said to be {\it \pdirectional}.
Suppose that a graph homomorphism $\fai$ satisfies both of the following conditions:
\[(u,v),(u,v') \in E_1 \text{ implies that } \fai(v) = \fai(v') \myand \]
\[(u,v),(u',v) \in E_1 \text{ implies that } \fai(u) = \fai(u').\]
Then, $\fai$ is said to be {\it \bidirectional}.\\
%
%
\vspace{-3mm}

\begin{defn}\label{defn:cover}
A graph homomorphism $\fai : G_1 \to G_2$ is called a {\it cover}\/ if it is a \pdirectional\ \tesgh.
\end{defn}
For a sequence $G_1 \getsby{\fai_1} G_2 \getsby{\fai_2} \dotsb$ of graph homomorphisms
 and $m > n$, we write $\fai_{m,n} := \fai_{n} \circ \fai_{n+1} \circ \dotsb \circ \fai_{m-1}$.
Then, $\fai_{m,n}$ is a graph homomorphism.
If all ${\fai_i}$ $(i \beposint)$ are edge surjective, then every $\fai_{m,n}$ is edge surjective.
If all ${\fai_i}$ $(i \beposint)$ are covers, every $\fai_{m,n}$ is a cover.
%
%
Let $G_0 := \left(\seb v_0 \sen, \seb (v_0,v_0) \sen \right)$ be a singleton graph.
For a sequence of graph covers $G_1 \getsby{\fai_1} G_2 \getsby{\fai_2} \dotsb$, we
 attach the singleton graph $G_0$ at the head.
We call a sequence of graph covers
 $G_0 \getsby{\fai_0} G_1 \getsby{\fai_1} G_2 \getsby{\fai_2} \dotsb$
 as a {\it graph covering} or just a {\it covering}.
%
%
%
%
Let us write the directed graphs as $G_i = (V_i,E_i)$ for $i \benonne$.
Define
\[V_{\Gcal} := \seb (x_0,x_1,x_2,\dotsc) \in \prod_{i = 0}^{\infty}V_i~|~x_i = \fai_i(x_{i+1}) \text{ for all } i \benonne \sen \text{ and}\]
\[E_{\Gcal} := \seb (x,y) \in V_{\Gcal} \times V_{\Gcal}~|~(x_i,y_i) \in E_i \text{ for all } i \benonne\sen,\]
each equipped with the product topology.
\begin{nota}\label{nota:opensets-of-vertices}
For each $n \benonne$, the projection from $V_{\Gcal}$ to $V_n$ is denoted by $\fai_{\infty,n}$. 
For $v \in V_n$, we denote a clopen set $U(v) := \fai_{\infty}^{-1}(v)$.
For a subset $V \subset V_n$, we denote a clopen set $U(V) := \bigcup_{v \in V}U(v)$.
\end{nota}
\begin{nota}\label{nota:decomp}
Let $X$ be a compact metrizable zero-dimensional space.
A finite partition of $X$ by non-empty clopen sets is called a {\it decomposition}.
The set of all decompositions of $X$ is denoted by $\Decomp(X)$.
Each $\Ucal \in \Decomp(X)$ is endowed with the discrete topology.
\end{nota}
%
%
We can state the following:
\begin{thm}[Theorem 3.9 and Lemma 3.5 of \cite{Shimomura4}]\label{thm:0dim=covering}
Let $\Gcal$ be a covering
 $G_0 \getsby{\fai_0} G_1 \getsby{\fai_1} G_2 \getsby{\fai_2} \dotsb$.
Then, $E_{\Gcal}$ is a continuous surjective mapping and $(V_{\Gcal},E_{\Gcal})$ is
 a zero-dimensional system.
Conversely, every zero-dimensional system can be written in this way.
Furthermore, if all $\fai_n$ are \bidirectional, then this zero-dimensional system is a 
 homeomorphism and every compact zero-dimensional homeomorphism is written in this way.
\end{thm}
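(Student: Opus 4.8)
The plan is to handle the two directions of each assertion separately, using throughout that $V_{\Gcal}$, being a closed subset of the product $\prod_{i \benonne} V_i$ of finite discrete spaces, is automatically compact, metrizable and zero-dimensional; the real content lies in analysing the relation $E_{\Gcal}$.

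For the forward direction I would first show that $E_{\Gcal}$ is the graph of a map. Fix $x \in V_{\Gcal}$ and for each $n$ let $S_n := \{ v \in V_n : (x_n, v) \in E_n \}$ be the set of out-neighbours of $x_n$; each $S_n$ is non-empty because $E_n$ is a surjective relation. Since $\fai_n$ is \pdirectional\ and every element of $S_{n+1}$ is an out-neighbour of $x_{n+1}$, the image $\fai_n(S_{n+1})$ is a single vertex $y_n$, and one checks that $y_n \in S_n$ and $\fai_n(y_{n+1}) = y_n$; thus $y := (y_n)_n$ lies in $V_{\Gcal}$ and $(x,y) \in E_{\Gcal}$, giving existence, while the same \pdirectional\ property forces any $y$ with $(x,y) \in E_{\Gcal}$ to satisfy $y_n = \fai_n(S_{n+1})$, giving uniqueness. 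Hence $E_{\Gcal}$ is a well-defined map; it is continuous because its $n$-th coordinate depends only on $x_{n+1}$, and it is surjective because for any $y$ the in-neighbour sets $T_n := \{ u \in V_n : (u,y_n) \in E_n\}$ are non-empty finite sets with $\fai_n(T_{n+1}) \subseteq T_n$, so their inverse limit is non-empty and any element of it is a preimage of $y$.

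For the converse I would realise a given zero-dimensional system $(X,f)$ as such an inverse limit by choosing a refining sequence of decompositions $\Ucal_0 \refine \Ucal_1 \refine \dotsb$ with $\mesh(\Ucal_n) \to 0$, setting $V_n := \Ucal_n$, declaring $(U,W) \in E_n$ iff $f(U) \cap W \neq \emptyset$, and letting $\fai_n$ send each cell of $\Ucal_{n+1}$ to the cell of $\Ucal_n$ containing it (the relations are surjective since $f$ is). Continuity of $f$ makes each $\fai_n$ edge-surjective, and the crucial point is to arrange that each $\fai_n$ is \pdirectional: this holds precisely when $f$ carries every cell of $\Ucal_{n+1}$ into a single cell of $\Ucal_n$, which I secure by taking $\Ucal_{n+1}$ to refine $f^{-1}(\Ucal_n)$ in addition to $\Ucal_n$ and to some partition of small mesh. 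The map $\pi : X \to V_{\Gcal}$ sending $x$ to the sequence of cells containing it is then a homeomorphism (injectivity from $\mesh(\Ucal_n) \to 0$, surjectivity from non-emptiness of nested clopen intersections) conjugating $f$ to $E_{\Gcal}$, since $(U_n(x), U_n(f(x))) \in E_n$ shows $E_{\Gcal}(\pi(x)) = \pi(f(x))$.

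The bidirectional statement follows by symmetry. If each $\fai_n$ is \bidirectional, its in-neighbour condition turns equal images into equal predecessors: two points $x,x'$ with $E_{\Gcal}(x) = E_{\Gcal}(x') = y$ satisfy $(x_{n+1}, y_{n+1}),(x'_{n+1}, y_{n+1}) \in E_{n+1}$, whence $x_n = x'_n$ for all $n$, so $E_{\Gcal}$ is injective and hence, as a continuous bijection of a compact Hausdorff space, a homeomorphism. Conversely, when $f$ is a homeomorphism I additionally require each $\Ucal_{n+1}$ to refine $f(\Ucal_n)$, so that the $f$-preimage of every cell of $\Ucal_{n+1}$ sits inside a single cell of $\Ucal_n$, which is exactly the in-neighbour condition rendering $\fai_n$ \bidirectional. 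I expect the main obstacle to be the surjectivity of $E_{\Gcal}$ in the forward direction, which rests not on any single cover axiom but on the non-emptiness of the inverse limit of the finite in-neighbour sets $T_n$, together with pinning down, in the converse, the precise refinement condition — $\Ucal_{n+1} \refine f^{-1}(\Ucal_n)$, and additionally $\Ucal_{n+1} \refine f(\Ucal_n)$ in the homeomorphism case — equivalent to $\fai_n$ being \pdirectional, respectively \bidirectional.
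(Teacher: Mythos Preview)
The paper does not supply its own proof of this theorem; it is quoted verbatim from \cite{Shimomura4} (Theorem~3.9 and Lemma~3.5 there) and used as a black box, so there is no in-paper argument to compare against. Your proposal is a correct and standard proof of the result, and it is essentially the argument one finds in the cited reference: the forward direction uses \pdirectional ity to show $E_{\Gcal}$ is single-valued and compactness of inverse limits of finite sets for surjectivity, while the converse builds the covering from a refining sequence of clopen partitions with $\Ucal_{n+1} \refine \Ucal_n \vee f^{-1}(\Ucal_n)$ (and additionally $\vee\, f(\Ucal_n)$ in the homeomorphism case). One small inaccuracy: edge-surjectivity of $\fai_n$ does not come from continuity of $f$ but simply from the fact that $\Ucal_{n+1}$ refines $\Ucal_n$ --- given $(U,W) \in E_n$, pick $x \in U$ with $f(x) \in W$ and take the $\Ucal_{n+1}$-cells through $x$ and $f(x)$.
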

%
%
We write $(V_{\Gcal},E_{\Gcal})$ as $G_{\infty}$.
%
%
Take a subsequence $n_0 = 0 < n_1 < n_2 < \dotsb$.
Then, we can get an essentially same covering
\[ G_0 \getsby{\fai_{n_1,0}} G_{n_1} \getsby{\fai_{n_2,n_1}} G_{n_2} \dotsb.\]
It is evident that the new covering produces
 a naturally topologically conjugate zero-dimensional  system.
Following the naming in the theory of Bratteli--Vershik systems,
 we call this procedure as
 {\it telescoping}.
%
%
%
%
%
%
%
%
%
\begin{nota}
Let $G = (V,E)$ be a surjective directed graph.
A sequence of vertices $(v_0,v_1,\dotsc,v_l)$ of $G$ is said to be a {\it walk}\/ of length $l$ if $(v_i, v_{i+1}) \in E$ for all $0 \le i < l$.
We denote as $l(w) := l$.
We say that a walk $w = (v_0,v_1,\dotsc,v_l)$ is a {\it path}\/
 if $v_i$ $(0 \le i \le l)$ are mutually distinct.
A walk $c = (v_0,v_1,\dotsc,v_l)$ is said to be a {\it cycle}\/ of period $l$
 if $v_0 = v_l$,
 and a cycle $c = (v_0,v_1,\dotsc,v_l)$
 is a {\it circuit} of period $l$ if the $v_i$ $(0 \le i < l)$ are mutually distinct.
A circuit $c$ and a path $p$ are also considered to be subgraphs of $G$
 with period $l(c)$ and length $l(p)$ respectively.
For a walk $w = (v_0,v_1,\dotsc,v_l)$,
 we define as $V(w) := \seb v_i \mid 0 \le i \le l \sen$
 and $E(w) := \seb (v_i,v_{i+1}) \mid 0 \le i < l \sen$.
For a subgraph $G'$ of $G$, we also define $V(G')$ and $E(G')$ in the same manner,
 especially, $V(G) = V$ and $E(G) = E$.
\end{nota}
%
%
\begin{nota}
Let $w_1 = (u_0,u_1,\dotsc, u_l)$ and $w_2 = (v_0,v_1,\dotsc, v_{l'})$ be walks
 such that $u_l = v_0$.
Then, we denote $w_1  w_2 := (u_0,u_1,\dots,u_l,v_1,v_2,\dotsc,v_{l'})$.
Evidently, we get $l(w_1  w_2) = l+l'$.
If $c$ is a cycle of length $l$, then for any positive integer $n$,
 a cycle $c^n$ of length $ln$ is well defined.
When a walk $w  c  w'$ can be constructed, a walk $w c^0  w'$ means $w  w'$.
\end{nota}

\section{Bratteli--Vershik systems}\label{sec:bratteli}
We follow Gjerde and Johansen \cite[\S 1]{GJ} to describe the Bratteli--Vershik representation for Cantor minimal homeomorphisms.
\begin{defn}
A {\it Bratteli diagram} is an infinite directed graph $(V,E)$, where $V$ is the vertex
set and $E$ is the edge set.
These sets are partitioned into non-empty disjoint finite sets 
$V = V_0 \cup V_1 \cup V_2 \cup \dotsb$ and $E = E_1 \cup E_2 \cup \dotsb$,
 where $V_0 = \seb v_0 \sen$ is a one-point set.
Each $E_n$ is a set of edges from $V_{n-1}$ to $V_n$.
Therefore, there exist two maps $r,s : E \to V$ such that $r:E_n \to V_n$
 and $s : E_n \to V_{n-1}$ for $n \bpi$, the range map and the source map respectively.
Moreover, $s^{-1}(v) \nekuu$ for all $v \in V$ and
$r^{-1}(v) \nekuu$ for all $v \in V \setminus V_0$.
We say that $u \in V_{n-1}$ is connected to $v \in V_{n}$ if there
 exists an edge $e \in E_n$ such that $s(e) = u$ and $r(e) = v$.
Unlike graph coverings,
 multiple edges between $u$ and $v$ are permitted.
The {\it rank $K$} of a Bratteli diagram is defined as
 $K := \liminf_{n \to \infty}\hash V_n$,
 where $\hash V_n$ is the number of elements in $V_n$.
\end{defn}
%
%
Let $(V,E)$ be a Bratteli diagram and $m < n$ be non-negative integers.
We define
\[E_{m,n} := \seb p \mid p \text{ is a path from a } u \in V_m \text{ to } v \in V_n \sen.\]
Then, we can construct a new Bratteli diagram $(V',E')$ as follows:
\[ V' := V_0 \cup V_1 \cup \dotsb \cup V_m \cup V_n \cup V_{n+1} \cup \dotsb \]
\[ E' := E_1 \cup E_2 \cup \dotsb \cup E_m \cup E_{m,n} \cup E_{n+1} \cup \dotsb. \]
The source map and the range map are also defined naturally.
This procedure is called {\it telescoping}.

\begin{defn}
A Bratteli diagram is called {\it simple}
 if we make (at most countably many times of) telescopings,
 all pairs of vertices $u \in V_n$ and $v \in V_{n+1}$ are joined
 by at least one edge for all $n \ge 0$.
\end{defn}

\begin{defn}
Let $(V,E)$ be a Bratteli diagram such that
$V = V_0 \cup V_1 \cup V_2 \cup \dotsb$ and $E = E_1 \cup E_2 \cup \dotsb$
 are the partitions,
 where $V_0 = \seb v_0 \sen$ is a one-point set.
Let $r,s : E \to V$ be the range map and the source map respectively.
We say that $(V,E,\le)$ is an {\it ordered}\/ Bratteli diagram if
 the partial order $\le$ is defined on $E$ such that 
 $e, e' \in E$ is comparable if and only if $r(e) = r(e')$.
 That is, we have a linear order on each set $r^{-1}(v)$ with $v \in \Vp$.
The edges $r^{-1}(v)$ are numbered from $1$ to $\hash(r^{-1}(v))$.
\end{defn}
Let $n > 0$ and $e = (e_n,e_{n+1},e_{n+2},\dotsc), e'=(e'_n,e'_{n+1},e'_{n+2},\dotsc)$ be cofinal paths from vertices of $V_{n-1}$, which might be different.
We get the lexicographic order $e < e'$ as follows:
\[\myif k \ge n \text{ is the largest number such that } e_k \ne e'_k, \mythen e_k < e'_k.\]%
\begin{defn}
Let $(V,E,\le)$ be an ordered Bratteli diagram.
Let $E_{\max}$ and $E_{\min}$ denote the set of maximal and minimal
edges, respectively.
An infinite path is maximal (minimal) if all the edges making up the
path are elements of $E_{\max}$ ($E_{\min}$).
\end{defn}

\begin{defn}
An ordered Bratteli diagram is properly ordered if it is
simple and if it has a unique maximal and a unique minimal path, denoted respectively by
$x_{\max}$ and $x_{\min}$.
\end{defn}

\begin{defn}[Vershik map]
Let $(V,E,\le)$ be a properly ordered Bratteli diagram.
Let 
\[E_{0,\infty} := \seb (e_1,e_2,\dotsc) \mid r(e_i) = s(e_{i+1})
 \myforall i \ge 1 \sen,\]
with the subspace topology of the product space $\prod_{i = 1}^{\infty}E_i$.
We can define the {\it Vershik map} $\phi : E_{0,\infty} \to E_{0,\infty}$ as follows:

\noindent If $e = (e_1,e_2,\dotsc) \ne x_{\max}$, then there exists the least $n \ge 1$ such that 
 $e_n$ is not maximal in $r^{-1}(r(e_n))$.
Then, we can select the least $f_n > e_n$ in $r^{-1}(r(e_n))$.
Let $v_{n-1} = s(f_n)$.
Then, it is easy to get the unique least path $(f_1,f_2,\dotsc,f_{n-1})$
 from $v_0$ to $v_{n-1}$.
We define $\phi(e) := (f_1,f_2,\dotsc,f_{n-1},f_n,e_{n+1},e_{n+2},\dotsc)$.
We define as $\phi(x_{\max}) = x_{\min}$.
The map $\phi : E_{0,\infty} \to E_{0,\infty}$ is called the {\it Vershik map}.
\end{defn}
In a theorem \cite[Theorem 4.7]{HPS}, we can find a correspondence
 that for a properly ordered Bratteli diagram,
 the Vershik map is a minimal homeomorphism
 from a compact metrizable zero-dimensional space to itself;
 conversely, a minimal homeomorphism from
 a compact metrizable zero-dimensional space to itself
 is represented as the Vershik map of a properly ordered Bratteli diagram.

In \cite{DM}, Downarowicz and Maass introduced the topological rank for
 a Cantor minimal homeomorphism.
\begin{defn}
Let $(X,f)$ be a Cantor minimal homeomorphism.
Then, the topological rank of $(X,f)$ is $0 \le K \le \infty$,
 if it has a Bratteli--Vershik representation with Bratteli diagram with rank $K$,
 and $K$ is the minimal of such numbers.
\end{defn}%
\if0
Following Medynets \cite{Medynets}, we define the next.
\begin{defn}
Let $(X,f)$ be a 0-dimensional homeomorphism.
Let $B$ be a non-empty clopen set.
A sequence of non-empty clopen sets $\seb B,f(B),\dotsc,f^{n-1}(B) \sen$ is called
 a $f$-{\it tower} if $f^i(B) \cap f^j(B) = \kuu$ for all $0 \le i < j < n$.
Each $f^i(B)$ $(0 \le i < n)$
 is called the {\it floor}.
For an \ft\ $\xi = \seb B,f(B),\dotsc,f^{n-1}(B) \sen$,
 we define the {\it height}\/
 $h(\xi) := n$,
 the {\it base}\/ $B(\xi) = B$ and the {\it top}\/ $T(\xi) = f^{h(\xi)-1}(B)$.
The particular sequence $\seb X \sen$ is denoted by $\xi_0$.
This is also an \ft\ with base $X$, top $X$ and height $h(\xi_0) = 1$.
Let $\Xi$ be a finite set of \ft s such that
 for any $\xi \ne \xi'$ $( \xi, \xi' \in \Xi)$, 
 $\left(\cup_{F \in \xi}F\right) \cap \left(\cup_{F \in \xi'}F\right) = \kuu$.
Let us write as
 $\bigcup \Xi := \bigcup\xi~ (\xi \in \Xi)
 = \seb F \mid F \in \xi \in \Xi \sen$.
Suppose that $\bigcup \Xi$
 is a decomposition by mutually distinct clopen subsets of $X$, i.e.,
 $\bigcup_{F \in \xi \in \Xi}F = X$
 and $\num{\left(\bigcup \Xi\right)} = \sum_{\xi \in \Xi}h(\xi)$.
Then, in this paper, $\Xi$ is called a {\it Kakutani--Rohlin decomposition}.
Then, the set $B(\Xi) := \bigcup_{\xi \in \Xi} B(\xi)$ is called the {\it base}
 of $\Xi$.
The set $T(\Xi) := \bigcup_{\xi \in \Xi} T(\xi)$ is the union of top floors,
 and we call $T(\Xi)$ the {\it top}\/ of $\Xi$.
Because $f$ is bijective, it is clear that $f(T(\Xi)) = B(\Xi)$.
If we put $\Xi_0 := \seb \xi_0 \sen$.
Then, this is also a \KRD\ with the base $X$ and the top $X$.
\end{defn}

\begin{defn}
For two Kakutani--Rohlin decompositions $\Xi$ and $\Xi'$, we say that
 $\Xi'$ is a refinement of $\Xi$ if the followings are satisfied
\itemb
\item for each $\xi' \in \Xi'$, there exists a $\xi \in \Xi$ such that
 $B(\xi') \subset B(\xi)$,
\item {\red \bf We try to omitt this course!! Its tiresome!!}
\item {\red \bf If this is necessary, perhaps, it is when we have to discuss
 about 2 definitions of topological rank coincides for homeomorphic case.}
\item {\red \bf or when we have to make the link between graph covering and Bratteli diagram clear.}
\itemn
\end{defn}

Because $\bigcup \Xi$ is a partition of $X$ by non-empty closed and open subsets,
we can define a finite directed graph with the set of vertices $\bigcup \Xi$ and
 the set of edges $\seb (u,v) \mid f(u) \cap v \nekuu \sen$.

{\red \bf Under construction!!!!!!!!!!}
\fi
\section{Covering of Kakutani--Rohlin Type}
%
%
Before we proceed to the coverings for general minimal continuous surjections that
are described in \S \ref{sec:gambaudo-martens}, here we introduce a type of coverings
that is closely related to Bratteli diagrams.
We shall construct a covering
 $G_0 \getsby{\fai_0} G_1 \getsby{\fai_1} G_2 \getsby{\fai_2} \dotsb$
that can express a certain set of Cantor minimal continuous surjections that
 contains all Cantor minimal homeomorphisms but not all minimal continuous surjections.
Let $G_0$ be the singleton graph, that corresponds to
 the top vertex of the Bratteli diagram.
We shall construct graphs $G_n$ $(n \ge 1)$ that are generalized figure-8s.
The generalized figure-8s are finite directed graphs
 that have a unique central vertex, and distinct
 circuits that start and end at the central vertex.
We denote the central vertex as $v_{n,0}$, and distinct circuits as 
 $\seb c_{n,1},c_{n,2},\dotsc,c_{n,l_n}\sen$.
We write the period of each circuit $c_{n,i}$ with $1 \le i \le l_n$
 as $l(n,i) \ge 1$.
Thus, we write
\[c_{n,i}
 = (v_{n,i,0} = v_{n,0}, v_{n,i,1}, v_{n,i,2}, \dotsc, v_{n,i,l(n,i)} = v_{n,0})\]
for $n \ge 1$ and $1 \le i \le l_n$.
As we are constructing each $G_n$ as a generalized figure-8, we assume that
 $V(c_{n,i}) \cap V(c_{n,j}) = \seb v_{n,0} \sen$
 for all $n \ge 1$ and $i \ne j$.

\begin{defn}
We say that a covering $G_0 \getsby{\fai_0} G_1 \getsby{\fai_1} G_2 \getsby{\fai_2} \dotsb$ is of {\it Kakutani--Rohlin type} if $G_n$ is a generalized figure-8 and
 $\fai_n(v_{n+1,0}) = v_{n,0}$ for every $n \bni$.
We call a covering of Kakutani--Rohlin type as a {\it {\rm KR}-covering} in short.
A {\rm KR}-covering has {\it rank} $1 \le K \le \infty$ if $\limsup_{n \to \infty}l_n = K$.
\end{defn}
%
%
Note that we can write for each $n > 0$ and $i$ with $1 \le i \le l_{n}$,
\[\fai_{n-1}(c_{n,i}) = c_{n-1,a(n,i,1)}c_{n-1,a(n,i,2)}\dotsb c_{n-1,a(n,i,k(n,i))}.\]
Furthermore, because of the \pdirectional ity of a graph cover, we get 
 \[a(n,1,1) = a(n,2,1) = \dotsb = a(n,l_n,1).\]
For an $n \bni$, $\fai_n$ is \bidirectional\ if and only if
 \[a(n,1,k(n,1)) = a(n,2,k(n,2)) = \dotsb = a(n,l_n,k(n,l_n)).\]
%
%
\begin{rem}\label{rem:exist-kr-partition}
For a KR-covering $G_0 \getsby{\fai_0} G_1 \getsby{\fai_1} G_2 \getsby{\fai_2} \dotsb$,
 we write $G_{\infty} = (X,f)$.
Although, $f$ may not be a homeomorphism, we are able to 
 consider a refining sequence of Kakutani--Rohlin partitions.
For each $n \ge 0$ and $c_{n,i}$ with $1 \le i \le l_n$, 
 let $c_{n,i} = (v_0,v_1, \dotsc, v_{l(n,i)} = v_0)$.
We write $B_i = f^{-1}(U(v_1)) \subset U(v_0)$.
Then, $f^{k}(B_i) = U(v_k)$ for $1 \le k < l(n,i)$.
We write $\xi_i = \seb f^k(B_i) \mid 0 \le k < l(n,i) \sen$.
The zero-dimensional system $(X,f)$ is partitioned into
 Kakutani--Rohlin partition $\Xi_n :=\bigcup_{1 \le i \le l_n} \xi_i$.
It is clear that $\Xi_{n+1}$ refines $\Xi_n$ for all $n \ge 0$.
\end{rem}
%
%
%
%
\begin{rem}\label{rem:a-link}
A KR-covering is linked with
 an ordered Bratteli diagram (that may not be properly ordered nor simple).
To see this, let
 $G_0 \getsby{\fai_0} G_1 \getsby{\fai_1} G_2 \getsby{\fai_2} \dotsb$ be a
 KR-covering.
The ordered Bratteli diagram $(V,E)$ is constructed as follows:
if $G_n$ consists of circuits $c_{n,i}$ $(1 \le i \le l_n)$,
 then we define $V_n = \seb c_{n,i} \mid 1 \le i \le l_n \sen$ for each $n \ge 0$;
if $\fai_{n-1}(c_{n,i}) = c_{n-1,a(n,i,1)}c_{n-1,a(n,i,2)}\dotsb c_{n-1,a(n,i,k(n,i))}$,
an edge that belongs to $E_{n}$ is made from $c_{n,i}$ to each $c_{n-1,a(n,i,j)}$
 for $1 \le j \le k(n,i)$ that is numbered by $j$.
Conversely, a properly ordered Bratteli--Vershik system has Kakutani--Rohlin partitions.
Therefore, we can get a KR-covering with respect to this partitions.
\end{rem}
\begin{thm}\label{thm:kr-covering}
Let $G_0 \getsby{\fai_0} G_1 \getsby{\fai_1} G_2 \getsby{\fai_2} \dotsb$ be a
 {\rm KR}-covering of rank $1 \le K \le \infty$, and $(X,f)$ be its inverse limit.
Then, there exists an $x_0 \in X$ such that for all $x \ne x_0$,
 it follows that $\hash f^{-1}(x) = 1$, and $\hash f^{-1}(x_0) \le K$.
\end{thm}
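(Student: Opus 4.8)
The plan is to show that the required point $x_0$ is the \emph{central} point $x^{*} := (v_{0,0}, v_{1,0}, v_{2,0}, \dotsc) \in X$, whose $n$-th coordinate is the central vertex $v_{n,0}$ of $G_n$ for every $n \benonne$; this sequence lies in $V_{\Gcal}$ precisely because $\fai_n(v_{n+1,0}) = v_{n,0}$ holds by the definition of a KR-covering. Throughout I write a point of $X = V_{\Gcal}$ as $x = (x_0, x_1, x_2, \dotsc)$ with $x_n \in V_n$ and $x_n = \fai_n(x_{n+1})$, and I recall from Theorem \ref{thm:0dim=covering} that $f = E_{\Gcal}$ is a genuine (single-valued) continuous surjection, so that $f^{-1}(y)$ is the set of $x \in X$ with $(x_n, y_n) \in E_n$ for all $n$.

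First I would record the predecessor structure of a generalized figure-8. In $G_n$ every \emph{non-central} vertex $v_{n,i,j}$ (with $1 \le j < l(n,i)$) has exactly one in-edge, namely from $v_{n,i,j-1}$, because the circuits meet only at $v_{n,0}$; whereas the central vertex $v_{n,0}$ has as its predecessors exactly the terminal vertices $v_{n,i,l(n,i)-1}$ of the circuits, hence at most $l_n$ of them. I would also note the key monotonicity: since $\fai_{n-1}(v_{n,0}) = v_{n-1,0}$ and $y_{n-1} = \fai_{n-1}(y_n)$ for any $y \in X$, the condition $y_n = v_{n,0}$ forces $y_{n-1} = v_{n-1,0}$. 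Consequently the set $\seb n \mid y_n = v_{n,0} \sen$ is downward closed, so for $y \ne x^{*}$ there is a threshold $N \ge 1$ with $y_n = v_{n,0}$ for $n < N$ and $y_n$ non-central for all $n \ge N$.

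Then I would treat the two cases. For $y \ne x^{*}$: for each $n \ge N$ the non-central vertex $y_n$ has a unique predecessor $p_n$, so any $x \in f^{-1}(y)$ must satisfy $x_n = p_n$; moreover these choices are automatically compatible, because $(\fai_n(p_{n+1}), y_n) = (\fai_n(p_{n+1}), \fai_n(y_{n+1})) \in E_n$ exhibits $\fai_n(p_{n+1})$ as a predecessor of $y_n$, whence $\fai_n(p_{n+1}) = p_n$ by uniqueness. The remaining coordinates $x_n$ for $n < N$ are then forced by $x_n = \fai_n(x_{n+1})$, so $x$ is uniquely determined; since $f$ is surjective such an $x$ exists, giving $\hash f^{-1}(y) = 1$.

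Finally, the main point is the bound $\hash f^{-1}(x^{*}) \le K$, which I would prove as follows. If $K = \infty$ there is nothing to show, so assume $K < \infty$. Because $\limsup_{n} l_n = K$ and the $l_n$ are integers, $l_n \le K$ for all sufficiently large $n$; thus the set $P_n := \seb u \in V_n \mid (u, v_{n,0}) \in E_n \sen$ of predecessors of the central vertex satisfies $\hash P_n \le l_n \le K$ for all large $n$. Now a preimage $x \in f^{-1}(x^{*})$ is exactly a sequence with $x_n \in P_n$ and $x_n = \fai_n(x_{n+1})$, and two distinct such sequences can agree on only a downward-closed (hence finite) set of coordinates, since agreement at arbitrarily large $n$ would force agreement everywhere. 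Hence if $f^{-1}(x^{*})$ had $K+1$ distinct elements, choosing $N$ larger than all $\binom{K+1}{2}$ pairwise agreement thresholds and large enough that $\hash P_N \le K$ would make the $N$-th coordinates $K+1$ distinct elements of $P_N$, a contradiction. Therefore $\hash f^{-1}(x^{*}) \le K$. I expect the bookkeeping of the predecessor sets to be routine, and the genuine obstacle to be this last step: converting the $\limsup$ hypothesis into a uniform cardinality bound and ruling out preimages that branch at infinitely many levels.
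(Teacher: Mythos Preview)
Your proof is correct and follows essentially the same approach as the paper: take $x_0$ to be the sequence of central vertices, use unique in-neighbors of non-central vertices to get $\hash f^{-1}(y)=1$ for $y\ne x_0$, and use a pigeonhole argument for the bound at $x_0$. The only cosmetic difference is in the final step: the paper observes that among $K+1$ preimages some pair must agree at infinitely many levels (hence everywhere), whereas you first pass to a single level $N$ at which all $K+1$ coordinates are already pairwise distinct and then invoke $\hash P_N\le l_N\le K$; both are straightforward rearrangements of the same pigeonhole idea.
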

\begin{proof}
Let $v_{n,0}$ be the central vertex of $G_n$ for each $n \ge 0$.
We define as $x_0 := (v_{0},v_{1,0},v_{2,0},\dotsc)$.
Let $x \ne x_0$ and $x = (v_{0},u_1,u_2,\dotsc)$.
Because $x \ne x_0$, there exists an $N > 0$
 such that $u_n \ne v_{n,0}$ for all $n \ge N$.
Take a $y \in X$ such that $f(y) = x$.
We write $y = (v_0,v_1,v_2,\dotsc)$.
It follows that $(v_n,u_n)$ is an edge of a circuit of $G_n$.
This circuit cannot be $(v_{n,0},v_{n,0})$ for $n \ge N$.
Thus, $v_n$ is uniquely determined for all $n \bni$.
Therefore, $y$ is uniquely determined, i.e., $\hash f^{-1}(x) = 1$.
For the number $\hash f^{-1}(x_0)$, it is sufficient to consider the case in which
 $K$ is finite.
Suppose that there exit mutually distinct $x_1,x_2,\dotsc,x_{K+1}$ such that
 $f(x_i) = x_0$ for all $1 \le i \le K+1$.
For each $1 \le i \le K+1$, let $x_i = (v_0,v_{1,i},v_{2,i},\dotsc)$.
Then, there exists a pair $i \ne i'$ with $1 \le i, i' \le K+1$
 such that for infinitely many $n$, $v_{n,i} = v_{n,i'}$.
This implies that $x_i = x_{i'}$, a contradiction.
\end{proof}
Here, we introduce a proposition that describes a condition of minimality
 of a graph covering.
\begin{prop}\label{prop:minimal}
Let $G_0 \getsby{\fai_0} G_1 \getsby{\fai_1} G_2 \getsby{\fai_2} \dotsb$ be a covering.
Then, the resulting zero-dimensional system $G_{\infty}$ is minimal
 if and only if for all $n \ge 0$,
 there exists an $m > n$ such that $V(\fai_{m,n}(c_{m,i})) = V(G_n)$.
\end{prop}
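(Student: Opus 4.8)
The plan is to work with the clopen basis $\seb U(v) \mid v \in V_n,\ n \bni \sen$ and to exploit that, in a KR-covering, the dynamics is completely transparent on each $G_n$: off the central vertex every vertex of a circuit $c_{n,i}$ has a unique outgoing edge, so the forward orbit of a point follows the circuit it currently sits on, and only at $v_{n,0}$ is the next circuit chosen, and then only by the higher coordinates. Throughout I read ``minimal'' as ``every forward orbit is dense'', equivalently ``$X$ has no proper non-empty closed set $A$ with $f(A) \subseteq A$'', and I will repeatedly use that one full traversal of a circuit $c_{m,i}$ (from $v_{m,0}$ back to $v_{m,0}$) has level-$n$ trace exactly $V(\fai_{m,n}(c_{m,i}))$, since $\fai_{m,n}$ is a graph homomorphism.

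For $(\Leftarrow)$ I fix $x \in X$ and a basic clopen set $U(v)$ with $v \in V_n$, and show the forward orbit of $x$ meets $U(v)$. Choosing $m > n$ with $V(\fai_{m,n}(c_{m,i})) = V(G_n)$ for every $i$, I first iterate $x$ at most $\max_i l(m,i)$ times to reach the central vertex $v_{m,0}$ at level $m$; from there the orbit traverses some full circuit $c_{m,i}$, and during this traversal its level-$n$ coordinate runs through all of $V(\fai_{m,n}(c_{m,i})) = V(G_n)$, in particular through $v$. Hence $f^k(x) \in U(v)$ for some $k \ge 0$, so $(X,f)$ is minimal.

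For $(\Rightarrow)$ I argue by contraposition and assume the condition fails at some level $n$. The key bookkeeping is that from $\fai_{m',m}(c_{m',i'}) = c_{m,j_1} \cdots c_{m,j_s}$ one gets $V(\fai_{m',n}(c_{m',i'})) = \bigcup_t V(\fai_{m,n}(c_{m,j_t}))$ for $m' > m > n$. This shows the property ``$V(\fai_{m,n}(c_{m,i})) = V(G_n)$ for all $i$'' is upward closed in $m$, so its failure for all $m > n$ means that at every level $m$ some circuit misses some vertex of $V(G_n)$. Writing the missed set $W(m,i) = V(G_n) \setminus V(\fai_{m,n}(c_{m,i}))$, the same decomposition gives $W(m',i') = \bigcap_t W(m,j_t)$; hence the non-empty sets $D_m = \bigcup_i W(m,i)$ of ``missable'' vertices satisfy $D_{m'} \subseteq D_m$, so $\bigcap_{m>n} D_m$ contains a vertex $w^*$ (necessarily $w^* \ne v_{n,0}$, since the central vertex lies on every circuit and is never missed). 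Thus every $I_m = \seb i \mid w^* \notin V(\fai_{m,n}(c_{m,i})) \sen$ is non-empty.

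The remaining, and hardest, step is to promote these level-by-level ``bad circuits'' into one point whose entire forward orbit avoids $U(w^*)$. Since $W(m',i') = \bigcap_t W(m,j_t)$, the first constituent of a circuit missing $w^*$ again misses $w^*$, so $i' \mapsto a(m+1,i',1)$ maps $I_{m+1}$ into $I_m$; the inverse limit of the non-empty finite sets $I_m$ along these maps is non-empty and yields a coherent sequence $(i_m)_{m>n}$ in which $c_{m,i_m}$ is the first constituent of $c_{m+1,i_{m+1}}$. I then let $x$ be the point with $x_m = v_{m,i_m,1}$ for $m > n$. For each $m$ the first $l(m,i_m)$ iterates of $x$ stay on $c_{m,i_m}$ at level $m$, so their level-$n$ coordinates lie in $V(\fai_{m,n}(c_{m,i_m}))$ and avoid $w^*$. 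As the lengths $l(m,i_m)$ are non-decreasing, either they tend to $\infty$, whence the whole forward orbit of $x$ avoids $U(w^*)$, or they stabilize, whence the $c_{m,i_m}$ are eventually single-constituent copies, the sets $U(V(c_{m,i_m}))$ are eventually nested, and their intersection is a finite ($f$-periodic) orbit avoiding $U(w^*)$. In both cases the forward orbit closure of $x$ is a non-empty closed set contained in $X \setminus U(w^*) \subsetneq X$ and carried into itself, contradicting minimality. The main obstacle is exactly this construction: over finitely many steps one can only keep the orbit on good circuits by appealing to ever higher levels, so the coherent inverse-limit choice together with the separate treatment of the stabilized (periodic) case is what makes the argument close.
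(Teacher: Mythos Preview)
Your proof is correct. The paper itself gives no self-contained argument here: its entire proof is the sentence ``By \cite[(a),(d),(e) of Theorem 3.5]{Shimomura5}, the conclusion is obvious.'' So your approach is genuinely different in that you supply a direct, elementary argument tailored to the KR-covering structure rather than invoking an external general characterisation of minimality for graph coverings.

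A few comparative remarks. For $(\Leftarrow)$ your argument is the natural one and is essentially what any unpacking of the cited reference would yield. For $(\Rightarrow)$ your contrapositive construction is more explicit than anything in the paper: the identity $W(m',i') = \bigcap_t W(m,j_t)$ is exactly the right bookkeeping, and it immediately gives both the nesting $D_{m'} \subseteq D_m$ and the well-definedness of the first-constituent map $I_{m+1} \to I_m$. One small simplification available in the KR setting: by \pdirectional ity the first-constituent index $a(m+1,\cdot,1)$ is the same for every circuit at level $m+1$, so your inverse-limit step is actually forced---the coherent sequence $(i_m)$ is unique, with $i_m$ equal to this common first index, and one checks directly that it lies in $I_m$ because $I_{m+1}\neq\emptyset$. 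This also makes the ``stabilized'' case transparent: the point $f^{L-1}(x)$ is the all-central-vertex point, and its image under $f$ has $m$th coordinate $v_{m,a(m+1,\cdot,1),1}=v_{m,i_m,1}=x_m$, so $f^L(x)=x$ on the nose. Your treatment via nested $U(V(c_{m,i_m}))$ reaches the same conclusion.

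What your approach buys is a self-contained proof inside the paper's own framework; what the paper's citation buys is brevity and a statement that in \cite{Shimomura5} is phrased for general coverings, not only KR-coverings (though as written here, with the circuit notation $c_{m,i}$, the proposition only makes sense in the KR/GM setting you assumed).
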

\begin{proof}
By \cite[(a),(d),(e) of Theorem 3.5]{Shimomura5}, the conclusion is obvious.
\end{proof}
With this proposition,
 when we consider a minimal zero-dimensional system, telescoping if necessary, 
 we can assume that
 $V(\fai_{n-1}(c_{n,i})) = V(G_{n-1})$ and $E(\fai_{n-1}(c_{n,i})) = E(G_{n-1})$
 for all $n > 0$ and $1 \le i \le l_n$,
 i.e. all circuit $c_{n-1,j}$ $(1 \le j \le l_{n-1})$ appear in every 
 $\fai_{n-1}(c_{n,i})$ with $1 \le i \le l_{n}$.
\begin{defn}
A KR-covering $G_0 \getsby{\fai_0} G_1 \getsby{\fai_1} G_2 \getsby{\fai_2} \dotsb$
 is said to be {\it simple} if for every $n \bni$, there exists an $m > n$ such that
 $V(\fai_{m,n}(c_{m,i})) = V(G_n)$.
\end{defn}
\if0 
\begin{rem}
It is clear that a simple KR-covering produces a minimal system.
Nevertheless, in our terminology, a simple KR-covering
 may not produce a homeomorphism.
Therefore, a simple KR-covering may not be linked to
 the simple Bratteli diagram, if the maximal infinite
 paths is not unique.
\end{rem}
\fi 
\begin{defn}
A Cantor minimal system that is an inverse limit of some KR-covering of finite rank
 has {\it topological {\rm KR}-rank} $K$ if 
 it is represented by some KR-covering of rank $K$ and $K$ is the least such number.
\end{defn}
The notion of KR-covering is indispensable to state a partial answer for a problem
 that appears in later section.
\section{Covering of Gambaudo--Martens Type}\label{sec:gambaudo-martens}
%
%
In Theorem 2.5 of \cite{GM},
 Gambaudo and Martens showed that every Cantor minimal system is 
an inverse limit of a special kind of graph covering.
In our context, their construction of graph covering is as follows:
 let $G_0 \getsby{\fai_0} G_1 \getsby{\fai_1} G_2 \getsby{\fai_2} \dotsb$
 be a graph covering.
As usual, we assume that $G_0$ is the singleton graph
 $(\seb v_0 \sen,\seb (v_0,v_0) \sen)$.
We shall construct graphs $G_n$ with an $n \ge 1$ such that
 there exist a unique vertex $v_{n,0}$ and finite number of circuits $c_{n,i}~(1 \le i \le l_n)$
 that starts and ends at $v_{n,0}$;
 and, roughly, if two circuits meet at a vertex, then the rest 
 of the two circuits merge until they reach to the end.
\begin{defn}\label{defn:GM-covering}
We say that
 a covering $G_0 \getsby{\fai_0} G_1 \getsby{\fai_1} G_2 \getsby{\fai_2} \dotsb$
is of {\it Gambaudo--Martens type} if for each $n > 0$, there exist 
 a vertex $v_{n,0}$ and finite number of circuits
 $c_{n,i}~(1 \le i \le l_n)$ and a covering map $\fai_n$ such that
\enumb
\item $c_{n,i}$ can be
 written as
 $(v_{n,0}=v_{n,i,0},v_{n,i,1},v_{n,i,2},\dotsc,v_{n,i,l(n,i)} = v_{n,0})$
 with $l(n,i) \ge 1$,
\item $\bigcup_{i = 1}^{l_n}E(c_{n,i}) = E(G_n)$,
\item if $v_{n,i,j} = v_{n,i',j'}$ with $j,j' \ge 1$, then
 $v_{n,i,j+k} = v_{n,i',j'+k}$
 for $k = 0,1,2,\dotsc$, until $j+k = l(n,i)$ and $j'+k = l(n,i')$
 at the same time,
\item $\fai_{n}(v_{n+1,0}) = v_{n,0}$, and 
\item $\fai_n(v_{n+1,i,1}) = v_{n,1,1}$ for all $1 \le i \le l_n$.
\enumn
We say that a covering of this type is a {\it {\rm GM}-covering} in short.
An GM-covering is said to be {\it simple} if 
 for all $n > 0$, there exists an $m > n$ such that
 for each $1 \le i \le l_m$, $E(\fai_{m,n}(c_{m,i})) = E(G_n)$.
As listed in the former Proposition \ref{prop:minimal}, 
 this condition makes the resulting zero-dimensional system minimal.
If we want to avoid the case in which the resulting zero-dimensional system
has an isolated points, we have to add the following condition:
 for every $n \ge 1$ and every vertex $v$ of $G_n$,
 there exist an $m > n$ and distinct vertices $u_1,u_2$ of $G_m$ such that
 $\fai_{m,n}(u_1) = \fai_{m,n}(u_2) =v$.
The {\it rank} of a GM-covering is 
 the integer $1 \le K \le \infty$ defined by $K := \liminf_{n \to \infty}l_n$.
\end{defn}
\begin{rem}
For $n \ge 0$ and $1 \le i \le l_{n+1}$, we can write as:
\[\fai_n(c_{n+1,i}) = c_{n,a(n,i,1)}c_{n,a(n,i,2)}\dotsb c_{n,a(n,i,k(n,i))},\]
such that $a(n,i,1) = 1$ for all $i$ with $1 \le i \le l_{n+1}$.

\end{rem}

%
%
\begin{nota}\label{nota:GMstep1}
By telescoping, we can make a simple GM-covering to have the condition as follows:
for every $n > 0$ and every  $i~(1 \le i \le l_n)$,
 $E(\fai_n(c_{n+1,i})) = E(G_n)$.
Hereafter, if we say that a GM-covering is simple, we assume that this condition
 is satisfied.
\end{nota}
\begin{thm}[Gambaudo and Martens, \cite{GM}]
A zero-dimensional system is minimal (not necessarily homeomorphic)
 if and only if it is represented as the inverse limit of a simple GM-covering.
\end{thm}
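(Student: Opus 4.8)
The plan is to prove the two implications separately, disposing of the easy direction with the minimality criterion already in hand and reserving the real work for the construction.

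For the \emph{if} direction (a simple GM-covering has minimal inverse limit), I would simply invoke Proposition~\ref{prop:minimal}. By Notation~\ref{nota:GMstep1} I may assume the telescoped form $E(\fai_n(c_{n+1,i})) = E(G_n)$ for every $n > 0$ and every $1 \le i \le l_n$. Composing over levels, any image walk $\fai_{m,n}(c_{m,i})$ traverses each circuit of $G_{m-1}$, then each of $G_{m-2}$, and so on, so inductively $E(\fai_{m,n}(c_{m,i})) = E(G_n)$ for all $m > n$. Since every $G_n$ is a surjective graph, each vertex is incident to some edge, so a walk exhausting all edges of $G_n$ also exhausts its vertices; thus $V(\fai_{m,n}(c_{m,i})) = V(G_n)$, and Proposition~\ref{prop:minimal} gives minimality. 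This paragraph is entirely routine.

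For the \emph{only if} direction I would build the GM-covering directly from the dynamics, imitating the Kakutani--Rohlin tower construction underlying \cite{GM}. Fix a point $x_0 \in X$ and a decreasing sequence of clopen neighbourhoods $U_0 \supseteq U_1 \supseteq \dotsb$ with $\bigcap_n U_n = \seb x_0 \sen$, so that together with the tower structure they generate the topology. For each $n$, use the base $U_n$: by compactness the first-return times to $U_n$ take only finitely many values, cutting $U_n$ into finitely many clopen blocks, one per return time. The orbit segment running from such a block back to $U_n$ is declared a circuit $c_{n,i}$, the piece containing $x_0$ is the central vertex $v_{n,0}$, and the floors of these towers (coincident floors identified) are the vertices of $G_n$, with the one-step action of $f$ supplying the edges. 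Minimality makes every point lie on such a return orbit, which is axiom~(b); the nesting $U_{n+1} \subseteq U_n$ gives axioms~(d) and~(e) once the base piece and the first floor are tracked across levels.

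The heart of the argument — and the step I expect to be the main obstacle — is axiom~(c), the merging property, together with checking that the floors form a clopen \emph{partition} on which $f$ acts compatibly, so that each $\fai_n$ is a genuine cover in the sense of Definition~\ref{defn:cover}. For a homeomorphism the return towers are disjoint and one recovers the figure-8 (KR) picture; for a non-injective surjection two distinct floors may share a common $f$-image, after which the two circuits must remain identified up to $v_{n,0}$. The natural way to force this is to choose the $U_n$ fine enough that $f$ sends each non-base floor into a single floor — such Markov-type refinements exist by the representation Theorem~\ref{thm:0dim=covering} — so that a merge $v_{n,i,j+1} = v_{n,i',j'+1}$ has a uniquely determined successor floor and axiom~(c) follows by induction on the height until both circuits close at $v_{n,0}$, while at the base the splitting of $U_n$ into first-return blocks produces exactly the several circuit-starts. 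Arranging these refinements to be nested and topology-generating is the delicate bookkeeping; granting it, simplicity of the covering follows from minimality just as in the \emph{if} direction, via Proposition~\ref{prop:minimal} after telescoping.
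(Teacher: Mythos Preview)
The paper does not give an independent proof of this theorem at all: its entire argument is the single line ``See the proof of Theorem~2.5 of \cite{GM}.'' So there is nothing substantive to compare your construction against within the present paper; your sketch is essentially a reconstruction of the Gambaudo--Martens argument that the paper merely cites, built on first-return towers over a nested sequence of clopen bases shrinking to a point, which is indeed the approach of \cite{GM}. Your \emph{if} direction via Proposition~\ref{prop:minimal} is also how the paper itself justifies minimality (see the sentence immediately following Definition~\ref{defn:GM-covering}), so that part matches exactly.

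Your sketch is sound in outline, but one point deserves a sharper statement: axiom~(c) is not obtained by \emph{choosing} the $U_n$ fine enough so that each floor has a unique image floor; rather, the floors should be \emph{defined} so that this holds automatically. Concretely, the vertices of $G_n$ are not the raw tower levels $f^j(B)$ but the atoms of the partition $\bigvee_{j=0}^{h-1} f^{-j}(\{U_n, X\setminus U_n\})$ restricted to the appropriate return-time towers (equivalently, one labels a point by its forward itinerary up to the first return to $U_n$). With that definition two circuits that meet at a non-central vertex share the same remaining itinerary to $U_n$, giving (c) and the \pdirectional ity of the cover simultaneously; the backward non-injectivity of $f$ is exactly what allows distinct circuits to merge in the first place. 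This is the only place your wording could mislead, and it is a matter of formulation rather than a genuine gap.
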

\begin{proof}
See the proof of Theorem 2.5 of \cite{GM}.
\end{proof}
%
%
\begin{rem}
Unlike KR-covering, GM-covering may not have Kakutani--Rohlin decomposition.
Therefore, it may not be possible to link with the Bratteli diagrams.
\end{rem}
As an analogue of topological rank for Cantor minimal homeomorphisms,
 we say that a minimal zero-dimensional system has a {\it topological rank} $K$,
 if there exists a simple GM-covering of rank $K$
 and $K$ is the smallest such integer (see \cite{DM}).
In \cite{DM}, they showed that a Cantor minimal homeomorphism whose topological
rank $K$ is finite and $K > 1$ is expansive,
 i.e., topologically conjugate to a minimal two-sided subshift.
We show an analogous theorem for (not necessarily homeomorphic)
 Cantor minimal systems.
%

Suppose that a simple GM-covering
 $G_0 \getsby{\fai_0} G_1 \getsby{\fai_1} G_2 \getsby{\fai_2} \dotsb$
 produces a minimal zero-dimensional system $G_{\infty}$.
We write $G_{\infty}  = (X,f)$.
We assume that $(X,f)$ is not a single periodic orbit.
Then, because of minimality, $(X,f)$ is a Cantor system
 and has no periodic orbits.
Therefore, the minimal length of circuits of $G_n$ grows to infinity, i.e.
 we get $l(n,i) \to \infty$ uniformly as $n \to \infty$.
\begin{nota}\label{nota:natural-extension}
For $(X,f)$, we construct the natural extension $(\hX_f,\sigma)$ as follows:
\itemb
\item $\hX_f := \{ (\dotsc,x_{-1},x_0,x_{1},x_{2}, \dotsc) \in X^{\Z} ~|~ f(x_{i}) = x_{i+1} \ \ \myforall\  i \bi\}$;
\item for $\hx = (\dotsc,x_{-1},x_0,x_{1},x_{2}, \dotsc) \in \hX_f$,
 $\sigma$ shifts $\hx$ to the left, i.e. $(\sigma(\hx))_i = x_{i+1}$ for all $i \bi$.
\itemn
\end{nota}
It is easy to check that if $(X,f)$ is minimal, then $(\hX_f,\sigma)$ is minimal.
For an $\hx \in \hX_f$ and an $i \bi$, we denote $\hx(i) = x_i$.
Then, $(\sigma(\hx))(i) = x_{i+1}$ for all $i \bi$.
We use a lot of notations and ideas that is found in \cite{DM}.
For every $\hx \in \hX_f$ and $i \bi$,
 there exists a unique $u_{n,i} \in V(G_n)$ such that $x_i \in U(u_{n,i})$.
Therefore, a unique sequence $\hx|_n  := (\dotsc,u_{n,-2},u_{n,-1},u_{n,0},u_{n,1},\dotsc)$ of vertices of $G_n$ is defined such that
 $x_i \in U(\hx|_n(i))$.
Although the vertex $u_{n,i}$ is uniquely determined for each $\hx$,
 $n \ge 0$ and $i \bi$,
 the circuit $c_{n,t}$ with $u_{n,i} \in V(c_{n,t})$ may not be unique.
Nevertheless, if $x_i \in U(v_{n,0})$ for some $i \bi$,
 then there exists a unique $t~(1 \le t \le l_n)$ such that
 $x_{i+1} \in U(v_{n,t,1})$;
 and therefore, $x_{i+j} \in U(v_{n,t,j})$ for all $0 \le j \le l(n,t)$.
%
%
\begin{figure}
\begin{center}\leavevmode 
\xy
(0,18)*{}; (100,18)*{} **@{-},
 (48,16)*{c_{n,1}
 \hspace{6mm} c_{n,3} \hspace{6mm}
 \hspace{12mm} c_{n,1} \hspace{8mm}
 \hspace{5mm} c_{n,3} \hspace{5mm}
 \hspace{6mm} c_{n,2} \hspace{6mm}
 \hspace{4mm} c_{n,1}
 \hspace{2mm} },
(7,18)*{}; (7,14)*{} **@{-},
(28,18)*{}; (28,14)*{} **@{-},
(49,18)*{}; (49,14)*{} **@{-},
(70,18)*{}; (70,14)*{} **@{-},
(84,18)*{}; (84,14)*{} **@{-},
(0,14)*{}; (100,14)*{} **@{-},
 (55,12)*{c_{n+1,5}
 \hspace{36mm} c_{n+1,1} \hspace{6mm}
 \hspace{20mm} c_{n+1,3} \hspace{8mm} },
(28,14)*{}; (28,10)*{} **@{-},
(84,14)*{}; (84,10)*{} **@{-},
(0,10)*{}; (100,10)*{} **@{-},
\endxy
\end{center}
\caption{$n$ and $(n+1)$th rows of a linked array system with cuts.}\label{array-system}
\end{figure}
\begin{nota}
We write this $t$ as $t(\hx,n,i)$, and $c_{n,t}$ as $c(\hx,n,i)$,
 for all $n \ge 0$ and $i \bi$.
\end{nota}
Let $k(0) \bi$ be such that $x_{k(0)} \in U(v_{n,0})$, and
 $k(1) > k(0)$ be the least $k > k(0)$ such that $x_k \in U(v_{n,0})$.
Then, we have combined the interval $u_{n,k(0)},u_{n,k(0)+1},\dotsc,u_{n,k(1)-1}$
 with the unique circuit $c(\hx,n,i)$ with $k(0) \le i < k(1)$.
Thus, we have gotten a sequence of $c_{n,i}$s, and we denote it as $\hx[n]$.
We write $\hx[n](i) = c(\hx,n,i)$ for all $n \ge 0$ and all $i \bi$.
Note that we can recover the sequence of 
 vertices of $G_n$ from $\hx[n]$.
A $\hx[0]$ becomes just a sequence of $v_0$.
%
%
For an interval $[n,m]$ with $m > n$, the combination of rows $\hx|_{n'}$
 with $n \le n' \le m$ is denoted as $\hx|_{[n,m]}$, and
 the combination of rows $\hx[n']$
 with $n \le n' \le m$ is denoted as $\hx[n,m]$, 
The {\it array system} of $\hx$ is the infinite combination $\hx|_{[0,\infty)}$
 of all rows $\hx|_n$ $0 \le n < \infty$.
The {\it linked array system} of $\hx$ is the infinite combination $\hx[0,\infty)$
 of all rows $\hx[n]$ $0 \le n < \infty$.
Note that from the information of $\hx[0,\infty)$, we can recover $\hx|_{[0,\infty)}$
 and can also identify $\hx$ itself.
If each circuit of $G_n$ is considered to be just an alphabet,
 then for $n \ge 0$ and $I < J$, we can consider a finite sequence of circuits of $G_n$:
\[\hx[n](I),\hx[n](I+1), \dotsc, \hx[n](J)\]
even if the completion of the circuits are cut off
 at the right or the left end in the above sequence.
\begin{defn}\label{defn:array-systems}
Let $\ddX_f := \seb \hx|_{[0,\infty)} \mid \hx \in \hX_f \sen$ be a set of sequences of
 symbols that are vertices of $G_n$ $(0 \le n < \infty)$.
The topology is generated by {\it cylinders} such that for $\hx \in \hX_f$ and $N,I > 0$:
\[\ddC(\hx,N,I) :=
 \seb \hy|_{[0,\infty)} \mid \hy \in \hX_f,~\hy|_{[0,N]}(i) = \hx|_{[0,N]}(i) \myforall 
 i \mywith -I \le i \le I \sen.\]
The shift map $\sigma : \ddX_f \to \ddX_f$ is define as above.
Then, $(\ddX_f,\sigma)$ is a zero-dimensional system and we call it
 as an {\it array system} of $(\hX_f,\sigma)$.
Let $\barX_f := \seb \hx[0,\infty) \mid \hx \in \hX_f \sen$ be a set of sequences of
 symbols that are circuits of $G_n$ $(0 \le n < \infty)$.
The topology is generated by {\it cylinders} such that for $\hx \in \hX_f$ and $N,I > 0$:
\[\barC(\hx,N,I) :=
 \seb \hy[0,\infty) \mid \hy \in \hX_f,~\hy[0,N](i) = \hx[0,N](i) \myforall 
 i \mywith -I \le i \le I \sen.\]
The shift map $\sigma : \barX_f \to \barX_f$ is define as above.
Then, $(\barX_f,\sigma)$ is a zero-dimensional system and we call it
 as an {\it linked array system} of $(\hX_f,\sigma)$.
\end{defn}

\begin{rem}
Clearly, $(\ddX_f,\sigma)$ is topologically conjugate to $(\hX_f,\sigma)$.
Moreover, because $(\ddX_f,\sigma)$ has a continuous factor map to $(\barX_f,\sigma)$
 and it is bijective as described above, $(\barX_f,\sigma)$ is also topologically
 conjugate to $(\hX_f,\sigma)$.
\end{rem}
%
%
%
%
For each sequence $\hx[n]$ of circuits of $G_n$ $(n > 0)$,
 we make an {\it $n$-cut at the position $i \bi$}
 when $\hx|_n(i) = v_{n,0}$, i.e., if there exists an $n$-cut at position $i \bi$,
 then $c(\hx,n,i-1)$ and $c(\hx,n,i)$ are separated by the cut.
The row $\hx[n]$ is separated into circuits exactly by the cuts
 (see Figure \ref{array-system}).
Note that for $m > n$, if there exists an $m$-cut at position $i$, then
 there exists an $n$-cut at position $i$.
%
%
%
%
\begin{figure}
\begin{center}\leavevmode 
\xy
(0,22)*{}; (100,22)*{} **@{-},
 (50,20)*{v_0\phantom{{}_{{},f}}\ v_0\phantom{{}_{{},l}}\ v_0\phantom{{}_{{},l}}\ v_0\phantom{{}_{{},l}}\ v_0\phantom{{}_{{},f}}\ v_0\phantom{{}_{{},l}}\ v_0\phantom{{}_{{},l}}\ v_0\phantom{{}_{{},l}}\ v_0\phantom{{}_{{},f}}\ v_0\phantom{{}_{{},l}}\ v_0\phantom{{}_{{},l}}\ v_0\phantom{{}_{{},l}}\ v_0\phantom{{}_{{},f}}\ v_0\phantom{{}_{{},l}}},
(0,22)*{}; (0,18)*{} **@{-},
(7,22)*{}; (7,18)*{} **@{-},
(14,22)*{}; (14,18)*{} **@{-},
(21,22)*{}; (21,18)*{} **@{-},
(28,22)*{}; (28,18)*{} **@{-},
(35,22)*{}; (35,18)*{} **@{-},
(42,22)*{}; (42,18)*{} **@{-},
(49,22)*{}; (49,18)*{} **@{-},
(56,22)*{}; (56,18)*{} **@{-},
(63,22)*{}; (63,18)*{} **@{-},
(70,22)*{}; (70,18)*{} **@{-},
(77,22)*{}; (77,18)*{} **@{-},
(84,22)*{}; (84,18)*{} **@{-},
(91,22)*{}; (91,18)*{} **@{-},
(98,22)*{}; (98,18)*{} **@{-},
(0,18)*{}; (100,18)*{} **@{-},
 (49,16)*{c_{1,3}
 \hspace{8mm} c_{1,3} \hspace{6mm}
 \hspace{11mm} c_{1,1} \hspace{8mm}
 \hspace{7mm} c_{1,3} \hspace{5mm}
 \hspace{6mm} c_{1,2} \hspace{6mm}
 \hspace{4mm} c_{1,1}
 \hspace{2mm} },
(7,18)*{}; (7,14)*{} **@{-},
(28,18)*{}; (28,14)*{} **@{-},
(49,18)*{}; (49,14)*{} **@{-},
(70,18)*{}; (70,14)*{} **@{-},
(84,18)*{}; (84,14)*{} **@{-},
(0,14)*{}; (100,14)*{} **@{-},
(56,12)*{c_{2,3} \hspace{0.7mm}\ 
 \hspace{32mm} c_{2,1} \hspace{27mm}
 \hspace{10mm} c_{2,3}
 \hspace{2mm} },
(28,14)*{}; (28,10)*{} **@{-},
(84,14)*{}; (84,10)*{} **@{-},
(0,10)*{}; (100,10)*{} **@{-},
(36,8)*{c_{3,3} \hspace{26mm}
 \hspace{15mm} c_{3,1}},
(28,10)*{}; (28,6)*{} **@{-},
(0,6)*{}; (100,6)*{} **@{-},
(50,4)*{\vdots},
\endxy
\end{center}
\caption{The first 4 rows of an array system.}\label{array-system-2}
\end{figure}
%
%
%
%
%
For each circuit $c_{n,i}$, we can determine a series of circuits
 by $\fai_{n-1}(c_{n,i}) = c_{n-1,1}c_{n-1,a(n,i,2)}\dotsb c_{n-1,a(n,i,k(n,i))}$.
Furthermore, each $c_{n-1,a(n,i,j)}$ determines a series of circuits by the map 
 $\fai_{n-2}$.
In this way we can determine a set of circuits arranged in a square form as in
 Figure \ref{2-symbol}.
Following \cite{DM},
 this form is said to be the {\it $n$-symbol} and denoted also by $c_{n,i}$.
For $m < n$, the projection $c_{n,i}[m]$ that is a finite sequence of circuits of $G_m$
 is also defined.
\begin{figure}
\begin{center}\leavevmode 
\xy
(28,32)*{}; (84,32)*{} **@{-},
(57,30)*{v_0\phantom{{}_{{},l}}\ v_0\phantom{{}_{{},l}}\ v_0\phantom{{}_{{},l}}\ v_0\phantom{{}_{{},l}}\ v_0\phantom{{}_{{},f}}\ v_0\phantom{{}_{{},l}}\ v_0\phantom{{}_{{},l}}\ v_0\phantom{{}_{{},l}}},
(28,32)*{}; (28,28)*{} **@{-},
(35,32)*{}; (35,28)*{} **@{-},
(42,32)*{}; (42,28)*{} **@{-},
(49,32)*{}; (49,28)*{} **@{-},
(56,32)*{}; (56,28)*{} **@{-},
(63,32)*{}; (63,28)*{} **@{-},
(70,32)*{}; (70,28)*{} **@{-},
(77,32)*{}; (77,28)*{} **@{-},
(84,32)*{}; (84,28)*{} **@{-},
(28,28)*{}; (84,28)*{} **@{-},
 (60,26)*{
 \hspace{7mm} c_{1,1} \hspace{8mm}
 \hspace{7mm} c_{1,3} \hspace{6mm}
 \hspace{6mm} c_{1,2} \hspace{8mm}
 \hspace{2mm} },
(28,28)*{}; (28,24)*{} **@{-},
(49,28)*{}; (49,24)*{} **@{-},
(70,28)*{}; (70,24)*{} **@{-},
(84,28)*{}; (84,24)*{} **@{-},
(28,24)*{}; (84,24)*{} **@{-},
(60,22)*{
 \hspace{23mm} c_{2,1} \hspace{23mm}
 \hspace{2mm} },
(28,24)*{}; (28,20)*{} **@{-},
(84,24)*{}; (84,20)*{} **@{-},
(28,20)*{}; (84,20)*{} **@{-},
\endxy
\end{center}
\caption{The $2$-symbol corresponding to the circuit $c_{2,1}$ of Figure \ref{array-system-2}.}\label{2-symbol}
\end{figure}

%
%
We have to make it clear that the author have borrowed the main story of the proofs
 from \cite{DM}.
Our work was just to check the proof in \cite{DM} is valid in our case of 
 Cantor minimal continuous surjections.
%
%

%
%
It is clear that $\hx[n] = \hx'[n]$ implies $\hx[0,n] = \hx'[0,n]$.
If $\hx \ne \hx'$ $(\hx, \hx' \in \hX_f)$,
 then there exists an $n > 0$ with $\hx[n] \ne \hx'[n]$.
For $\hx, \hx' \in \hX_f$, we say that the pair $(\hx,\hx')$ is {\it $n$-compatible}
 if $\hx[n] = \hx'[n]$.
If $\hx[n] \ne \hx'[n]$, then we say that $\hx$ and $\hx'$ are {\it $n$-separated}.
We recall that
 if there exists an $n$-cut at position $k$, then there exists an $m$-cut at position
 $k$ for all $0 \le m \le n$.
Let $\hx \ne \hx'$. 
%
%
If a pair $(\hx,\hx')$ is $n$-compatible and $(n+1)$-separated,
 then, we say the {\it depth of compatibility} of $x$ and $x'$ is $n$,
 or the pair $(x,x')$ has {\it depth} $n$.
If $(\hx,\hx')$ is a pair of depth $n$ and $(\hx,\hx'')$ is a pair of depth $m > n$,
 then the pair $(x',x'')$ has depth $n$ (and hence never equal).
%
%
An $n$-separated pair $(\hx,\hx')$ is said to {\it have a common $n$-cut} if 
 both $\hx$ and $\hx'$ have an $n$-cut at the same position.
If a pair has a common $n$-cut then it also has a common $m$-cut
 for each $0 \le m \le n$.
The set $X_n := \seb \hx[n] \mid \hx \in \hX_f \sen$ is a two-sided subshift
 of finite set $\seb c_{n,1}, c_{n,2}, \dotsc, c_{n,l_n} \sen$.
The factoring map is denoted by $\pi_n : \hX_f \to X_n$,
 and the shift map is denoted by $\sigma_n : X_n \to X_n$.
We write just $\sigma = \sigma_n$ for all $n$, 
 if there is no confusion.

\if0 
%
%
We say the pair $(x,x')$ is {\it $(n,\to)$-compatible} if there exists an $k > 0$
 such that $(f^k(x))[n] = (f^k(x'))[n]$.
If $(f^k(x))[n] \ne (f^k(x'))[n]$ for all $k > 0$,
 then we say that
 $x$ and $x'$ are {\it $(n,\to)$-separated}.
It is clear that a pair $(x,x')$ is $(n,\to)$-separated if and only if 
 it is not $(n,\to)$-compatible.
If there exists an $n$-cut at position $k$, then there exists an $m$-cut at position
 $k$ for all $0 \le m \le n$.
Let $x \ne x'$. 
%
%
If a pair $(x,x')$ is $(n,\to)$-compatible and $(n+1,\to)$-separated,
 then, we say the {\it \infde} of $x$ and $x'$ is $n$,
 or the pair $(x,x')$ has {\it \infde} $n$.
If $(x,x')$ is a pair of \infd $n$ and $(x,x'')$ is a pair of \infd $m > n$,
 then for arbitrarily large $k> 0$,
 $f^k(x)[0,n] = f^k(x')[0,n]$ and yet both $f^k(x)[n+1] \ne f^k(x')[n+1]$
 and $f^k(x)[0,n+1] = f^k(x'')[0,n+1]$ hold.
Therefore, we get
 $f^k(x')[0,n] = f^k(x'')[0,n]$ and $f^k(x')[n+1] \ne f^k(x'')[n+1]$ for 
 arbitrarily sufficiently large $k > 0$, and
 the pair $(x',x'')$ has \infd $n$ (and hence never equal).
An $n$-separated pair $(x,x')$ is said to {\it have a common $n$-cut} if 
 both $x$ and $x'$ have an $n$-cut at the same position.
If a pair has a common $n$-cut then it also has a common $m$-cut
 for each $0 \le m \le n$.
The set $X_n := \seb x[n] \mid x \in X \sen$ is a one-sided subshift
of finite set $V(G_n)$.
The factoring map is denoted by $\pi_n : X \to X_n$,
 and the shift map is denoted by $\sigma_n : X_n \to X_n$.
Nevertheless, we write just $\sigma = \sigma_n$ for any $n$.
Let $\AP{X}$ be the set of all pairs that approaches to each other at infinity,
 namely $\AP{X} :=
 \seb (x,x') \in X \times X \mid \lim_{k \to \infty}d(f^k(x),f^k(x')) = 0\sen$.
Then, $\NAP{X} = \seb (x,x') \mid \limsup_{k \to \infty} d(f^k(x),f^k(x')) > 0 \sen$.
\fi 

\section{Main Theorem.}
%
%
In this section, we state our main result, and prove the theorem.
Again, we have to make it clear that the author have borrowed the main story
 of the proofs from \cite{DM}.
Our work from now on is just to check the proofs in \cite{DM} is valid in our case of 
 Cantor minimal continuous surjections.
\begin{thm}[Main Result]
Let $(X,f)$ be a minimal zero-dimensional system
 whose topological rank is $K~(1 < K < \infty)$.
Then, its natural extension $(\hX_f,\sigma)$ is expansive.
\end{thm}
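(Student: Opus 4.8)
The plan is to realize $(\hX_f,\sigma)$, up to conjugacy, as a two-sided subshift over a finite alphabet; since every such subshift is expansive and expansiveness is a conjugacy invariant, this suffices. Concretely, I would show that there is a level $N$ for which the factor map $\pi_N : \hX_f \to X_N$ is injective. As $X_N$ is a two-sided subshift over the finite alphabet $\seb c_{N,1},\dotsc,c_{N,l_N} \sen$ and $\pi_N$ intertwines $\sigma$ with $\sigma_N$, injectivity makes $\pi_N$ a topological conjugacy of $(\hX_f,\sigma)$ onto $(X_N,\sigma_N)$, yielding expansiveness at once. In the language already set up this is equivalent to a uniform bound on the depth of compatibility: a pair $(\hx,\hx')$ is $n$-compatible exactly when $\pi_n(\hx) = \pi_n(\hx')$, and compatibility is downward closed in $n$, so $\pi_N$ is injective iff no two distinct points are $N$-compatible, i.e. iff every distinct pair has depth $< N$. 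Thus the whole theorem reduces to a uniform bound on depth.

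First I would telescope the simple GM-covering so that $l_n = K$ for all $n \ge 1$; this is possible because $\liminf_n l_n = K$ forces $l_n = K$ for infinitely many $n$, and the hypothesis $1 < K < \infty$ is exactly what makes the target alphabet finite and nontrivial. Since $(X,f)$ is a Cantor minimal system and not a single periodic orbit, it is aperiodic and the shortest circuit length grows to infinity, $l(n,i) \to \infty$. I would then record the substitutive data: each $(n+1)$-symbol $c_{n+1,i}$ reads under $\fai_n$ as a word $c_{n,1}c_{n,a(n,i,2)}\dotsb c_{n,a(n,i,k(n,i))}$ in the $n$-circuits, with fixed first letter $c_{n,1}$, and the merging property (c) together with (e) of Definition \ref{defn:GM-covering} controls how circuits can overlap. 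The $n$-symbols, the $n$-cuts, and the rule that an $m$-cut forces an $n$-cut for $n \le m$ are precisely the bookkeeping devices of \cite{DM}.

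The core step is the recognizability bound. A pair of depth $n$ has identical level-$n$ circuit sequences but distinct level-$(n+1)$ ones; since the $(n+1)$-symbols are built from the $n$-circuits, the disagreement lies entirely in how the common bi-infinite $n$-circuit sequence is parsed into $(n+1)$-symbols, that is, in the placement of the $(n+1)$-cuts. So bounding the depth is the assertion that, for large $n$, this parsing is unique — a Mossé-type recognizability property. I would argue by contradiction, following \cite{DM}: if depths were unbounded, then for arbitrarily large $n$ one obtains a common $n$-circuit sequence carrying two distinct, relatively shifted systems of $(n+1)$-cuts. Because there are only $K$ circuits at every level, a pigeonhole argument across the infinitely many levels produces a repeated local configuration of this overlap, which forces some circuit to coincide with a nontrivial shift of itself and hence manufactures a periodic point, contradicting aperiodicity. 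The notions of \emph{common $n$-cut} and of $n$-separated pairs are exactly what quantify the overlap and drive this counting.

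The main obstacle is that $f$ is only a continuous surjection, so the backward coordinates of $\hX_f$ are not organized by an ordered Bratteli--Vershik diagram as in \cite{DM}; as the remark following Definition \ref{defn:GM-covering} notes, a GM-covering need not carry any Kakutani--Rohlin structure. Passing to the natural extension is precisely what restores a two-sided invertible model, but I would still have to check that cuts, $n$-symbols, and the merging property behave coherently in negative time, so that the recognizability and counting arguments run symmetrically in both directions. The crux is verifying that property (c) of Definition \ref{defn:GM-covering} supplies the same rigidity of parsings that the unique-path structure of a properly ordered Bratteli diagram supplies in \cite{DM}; once that rigidity is established, the finite-rank pigeonhole and the resulting periodicity contradiction should go through essentially verbatim.
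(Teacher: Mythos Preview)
Your reduction is right: bounding the depth of compatibility, equivalently making some $\pi_N$ injective, is exactly the target, and the paper proceeds the same way by contradiction. But your sketch of how the contradiction is obtained skips the structural dichotomy that actually drives the argument in \cite{DM} and here, and the single ``pigeonhole $\to$ periodicity $\to$ aperiodicity contradiction'' you describe does not cover both branches.

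The proof splits on whether pairs of depth $n$ admit common $m$-cuts for $m>n$. In the branch where common $m$-cuts persist (Case~1), there is no periodicity argument at all: one fixes $m>N+K$ and, using $K$ pairs of depths $m-1,\dotsc,m-K$ each with a common $m$-cut, iteratively \emph{modifies the GM-covering at level $m$} --- identifying $m$-symbols with identical $[0,m-1]$-rows, or splitting a circuit at an internal vertex and re-merging --- so that the number of level-$m$ circuits strictly drops while the pair $(\hx_{m-K},\hx'_{m-K})$ stays $m$-separated with a common $m$-cut. After $K$ steps one is left with a single circuit yet an $m$-separated pair, an outright contradiction. This constructive rewriting of $G_m$ (and the check that \pdirectional ity survives, which uses condition~(e), not~(c)) is the genuine content of Case~1 and is absent from your outline.

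In the other branch (Case~2), after telescoping so that every pair of depth $n$ lacks a common $(n+1)$-cut, one first needs a \emph{translation lemma}: minimality lets you transport the depth-$n$ pair to sit over any chosen $\hy_0$, producing $K^{K+1}+1$ mutually $m$-separated, $n_0$-compatible points with no common $m$-cuts. Only then does the \emph{infection lemma} apply: the pigeonhole is over these many lines at a single position (not ``across the infinitely many levels''), and the conclusion is that each $X_{n_0}$ is a periodic orbit, so $(\hX_f,\sigma)$ is an odometer. The contradiction is with the hypothesis $K>1$ (odometers have topological rank $1$), not with aperiodicity of the Cantor minimal system. Your sketch gestures at something like this branch, but without the translation step and the $K^{K+1}+1$ count it is not a proof, and the stated contradiction is the wrong one.
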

\noindent {\it Proof.}
As in the proof of \cite{DM}, we show by contradiction.
Suppose that the claim fails.
Then, for all $L > 0$, there exists a pair $(\hx,\hx')$ with distinct elements
 of $\hX_f$ that is $L$-compatible.
Because $\hx \ne \hx'$, for some $m > L$, $(\hx,\hx')$ is $m$-separated.
Therefore, $(\hx,\hx')$ has depth $n$ with $L \le n < m$.
Therefore, for infinitely many $n$, there exists a pair $(\hx_n,\hx'_n)$ of depth $n$.
By telescoping, we can assume that
 every $n > 0$ has a pair $(\hx_n,\hx'_n)$ of depth $n$.
Note that even after another telescoping, this quality still holds.
As in the proof of \cite{DM}, we prove in separated cases:
\enumb
\renewcommand{\labelenumi}{(\arabic{enumi})}
\renewcommand{\theenumi}{(\arabic{enumi})}
\setcounter{enumi}{0}
\item\label{case1} there exists an $N$ such that for all $n > N$ and every $m > n$,
 there exists a pair $(\hx_n,\hx'_n)$ of depth $n$ with a common $m$-cut;
\item\label{case2} for infinitely many $n$, and then every sufficiently large
 $m > n$, any pair of depth $n$ has no common $m$-cut.
\enumn

\vspace{3mm}

\noindent {\it Proof in case \ref{case1}.}
As in the proof of \cite{DM},
 we prove that such a case never occurs, even for $K = 1$.
Fix some $m > N + K$ and for an integer $n \in [m - K, m -1]$ let $(\hx_n,\hx'_n)$ be
 a pair of depth $n$ with a common $m$-cut.
For $n = m -1$, we have a $(m - 1)$-compatible $m$-separated pair
 $(\hx_{m-1},\hx'_{m-1})$ with a common $m$-cut.
%
%
%
Suppose that all of the $m$-cuts of $\hx_{m-1}$ and $\hx'_{m-1}$ are same.
Because the pair $(\hx_{m-1},\hx'_{m-1})$ is $m$-separated, 
 at least two distinct symbols are in the same place of $\hx_{m-1}$ and $\hx'_{m-1}$.
These symbols have the same rows from $0$ to $m-1$.
Therefore, at least two distinct $m$-symbols have the same rows from $0$ to $m-1$.
If a set of $m$-symbols whose rows from $0$ to $m-1$ are the same,
 we factor these to the same alphabet,
 i.e. we make a new GM-covering identifying such circuits of $G_m$.
The new directed graph is also a GM-covering,
 the projection mapping to $G_{m-1}$ from the new $G_m$ is well defined, and
 the projection mapping to the new $G_{m}$ is also well defined.
It is clear that the new covering is \pdirectional.
Furthermore, the inverse limit of the new covering and the $X_{m-1}$ of the new covering is the same as the old one.
Because the pair $(\hx_{m-2},\hx'_{m-2})$ was $(m-1)$-separated, and
 this factorization does not affect in the $(m-1)$th row,
 the pair $(\hx_{m-2},\hx'_{m-2})$ is still $m$-separated.
Note that the number of circuits of $m$th row has been decreased at least $1$.
%
%
Next, we consider the case in which after a common $m$-cut of 
 the pair $(\hx_{m-1},\hx'_{m-1})$,
 the coincidence of the positions of $m$-cuts do not continue to the right or left end.
\begin{figure}
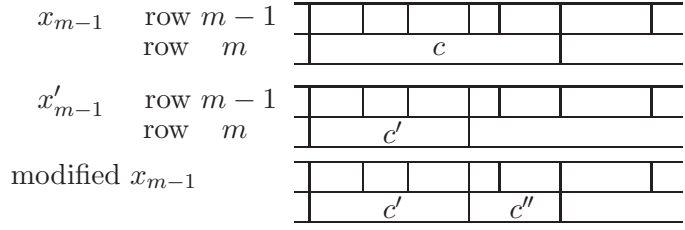

\begin{center}\leavevmode 
\xy
(33,28)*{}; (84,28)*{} **@{-},
(15,26)*{x_{m-1} \hspace{4mm} \text{ row } m-1},
(60,26)*{},
(35,28)*{}; (35,24)*{} **@{-},
(42,28)*{}; (42,24)*{} **@{-},
(48,28)*{}; (48,24)*{} **@{-},
(56,28)*{}; (56,24)*{} **@{-},
(60,28)*{}; (60,24)*{} **@{-},
(68,28)*{}; (68,24)*{} **@{-},
(80,28)*{}; (80,24)*{} **@{-},
%
(33,24)*{}; (84,24)*{} **@{-},
(20,22)*{\text{row \hspace{3mm}} m },
(60,22)*{
 \hspace{16mm} c \hspace{30mm}
 \hspace{2mm} },
(35,24)*{}; (35,20)*{} **@{-},
(68,24)*{}; (68,20)*{} **@{-},
%
(33,20)*{}; (84,20)*{} **@{-},
(33,17)*{}; (84,17)*{} **@{-},
(15,15)*{x'_{m-1} \hspace{4mm} \text{ row } m-1},
(60,13)*{},
(35,17)*{}; (35,13)*{} **@{-},
(42,17)*{}; (42,13)*{} **@{-},
(48,17)*{}; (48,13)*{} **@{-},
(56,17)*{}; (56,13)*{} **@{-},
(60,17)*{}; (60,13)*{} **@{-},
(68,17)*{}; (68,13)*{} **@{-},
(80,17)*{}; (80,13)*{} **@{-},
%
(33,13)*{}; (84,13)*{} **@{-},
(20,11)*{\text{row \hspace{3mm}} m },
(60,11)*{
 \hspace{10mm} c' \hspace{36mm}
 \hspace{2mm} },
(35,13)*{}; (35,9)*{} **@{-},
(56,13)*{}; (56,9)*{} **@{-},
%
(33,9)*{}; (84,9)*{} **@{-},
(33,7)*{}; (84,7)*{} **@{-},
(10,5)*{\text{modified } x_{m-1} \hspace{4mm}},
(60,3)*{},
(35,7)*{}; (35,3)*{} **@{-},
(42,7)*{}; (42,3)*{} **@{-},
(48,7)*{}; (48,3)*{} **@{-},
(56,7)*{}; (56,3)*{} **@{-},
(60,7)*{}; (60,3)*{} **@{-},
(68,7)*{}; (68,3)*{} **@{-},
(80,7)*{}; (80,3)*{} **@{-},
%
(33,3)*{}; (84,3)*{} **@{-},
%
(60,1)*{
 \hspace{10mm} c' \hspace{36mm}
 \hspace{2mm} },
(77,1)*{
 \hspace{10mm} c'' \hspace{36mm}
 \hspace{2mm} },
(35,3)*{}; (35,-1)*{} **@{-},
(56,3)*{}; (56,-1)*{} **@{-},
(68,3)*{}; (68,-1)*{} **@{-},
%
(33,-1)*{}; (84,-1)*{} **@{-},
\endxy
\end{center}
\caption{Change of an $m$-symbol. There is a common cut at the left end.}\label{change-symbol}
\end{figure}
Suppose that after a common $m$-cut at position $k_0$,
 the continuation of common $m$-cuts does not continue to the right end.
Let $k_1 > k_0$ be the position of the first common $m$-cut
 such that the right $m$-symbols $c$ and $c'$ of $\hx_{m-1}$ and $\hx'_{m-1}$
 has different length.
We assume without loss of generality that $l(c') < l(c)$.
Let $k_2 = k_1 +l(c')$.
Then, $k_2$ is the position of the next $m$-cut of $\hx'_{m-1}$.
Because $\hx_{m-1}[m-1] = \hx'_{m-1}[m-1]$,
 the $m$-symbol $c$ itself has an $(m-1)$-cut at $l(c')$.
Let $c = (v_{n,0},v_1,v_2,\dotsc,v_{l(c)})$.
Then, $c$ is separated into two parts $(v_{m,0},v_1,v_2,\dotsc,v_{l(c')})$
 and $(v_{l(c')},v_{l(c')+1},\dotsc,v_{l(c)})$ with the vertex $v_{l(c')}$ in common.
Let $c= c_1,c_2,\dotsc,c_s$ be the circuits of $G_m$ that passes $v_{l(c')}$.
We make a new graph identifying $v_{l(c')}$ with $v_{m,0}$.
Note that $(c[m-1])(l(c'))$ is the ending position of $c'$.
Therefore, the first circuit that begins from $(c[m-1])(l(c'))$ is $c_{m-1,1}$.
Because of this, the new cover map satisfies \pdirectional ity.
Each circuit $c_i$ $i = 1,2,\dotsc,s$ is decomposed into two circuits
 $\barc_i$ that is distinct for each $i$ and the common $c''$, i.e.
 each original circuit $c_i$ is now a cycle and $c_i = \barc_i c''$.
The number of circuits is now increased $1$.
Nevertheless, for two circuits $\barc_1~({\rm or }~\barc~)$ and $c'$,
 we get $\barc_1[m-1] = c'[m-1]$.
Therefore, we can merge these two circuits and construct a new graph and the
 covering.
For the last covering, the new $G_m$ has the same number of circuits as the
 original one (see Figure \ref{change-symbol}).
Furthermore, they have the same inverse limits and the same $X_{m-1}$.
After this modification no cut that existed is removed.
The number of $m$-symbols is not changed.
The coincidence of the $m$-cut from $k_0$ to the right might be shortened.
Nevertheless, the same modification is possible, and
 finally we come to the point that we have the same $m$-cuts from $k_0$ to the right end.
The same argument is valid to the left direction from $k_0$.
Now, 
 $\hx_{m-1}$ and $\hx'_{m-1}$ have the same $m$-cuts throughout the sequences.
Then, we can apply the previous argument.
In this way, we get a factor in the $m$th row that decreases the number of circuits
 of $G_m$, and yet $(\hx_{m-2},\hx'_{m-2})$ is still $m$-separated.

We can now delete the $(m-1)$th row, and continue this process.
Finally, the $m$th row is represented by only one circuit,
 and still the pair $(\hx_{m-K},\hx'_{m-K})$ is $m$-separated and has a common $m$-cut,
 a contradiction.

\vspace{5mm}

%
%
%
\noindent{\it Proof in case (2)}.
As in the proof of \cite{DM}, we show that $(\hX_f,\sigma)$ becomes an odometer.
For arbitrarily large $n$, there exists an $m(n)>n$ such that
 every pair $(\hx_n,\hx'_n)$ of depth $n$
 has no common $m(n)$-cut.
As described shortly in \cite{DM}, by telescoping,
 we wish to show that the condition of \ref{case2} holds for every $n$.
Fix $n$.
Take an $n' > m(n)$ such that
 for every pair $(\hx_{n'},\hx'_{n'})$ of depth $n'$
 has no common $m'(n')$-cut.
Then, because $n' > m(n)$, every pair $(\hx_n,\hx'_n)$ of depth $n$
 has no common $n'$-cut.
Thus, by telescoping (from $n'$ to $n$),
 for every pair $(\hx_n,\hx'_n)$ of depth $n$ has
 no common $(n+1)$-cut.
Thus, consecutive application of this telescoping,
 we get a covering such that
 for every  $n$, every pair $(\hx_n,\hx'_n)$ of depth $n$
 has no common $(n+1)$-cut.

\begin{lem}\label{lem:translation}
Let $(\hx_n,\hx'_n)$ be a pair of depth $n$ that has no common $(n+1)$-cut.
Let $\hy \in \hX_f$.
Then, there exists a $\hy' \in \hX_f$ such that $\hy[n] = \hy'[n])$
 and the pair $(\hy,\hy')$ has no common $(n+1)$-cut.
Especially, $(\hy,\hy')$ has depth $n$.
\end{lem}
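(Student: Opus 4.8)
The plan is to treat the given pair $(\hx_n,\hx'_n)$ as a seed and to spread its defining property over the whole system by minimality of $(\hX_f,\sigma)$. To this end I would introduce the set of all pairs enjoying the two relevant properties,
\[ R := \seb (\hz,\hz') \in \hX_f \times \hX_f \mid \hz[n] = \hz'[n] \text{ and } \hz, \hz' \text{ have no } (n+1)\text{-cut at a common position} \sen. \]
By hypothesis $(\hx_n,\hx'_n) \in R$, so $R \noemp$.

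Next I would verify that $R$ is closed and invariant under the homeomorphism $\sigma \times \sigma$. For closedness, the condition $\hz[n] = \hz'[n]$ is the intersection over all positions $i \bi$ of the clopen conditions $\hz[n](i) = \hz'[n](i)$; and ``no $(n+1)$-cut at a common position'' is the intersection over all $i$ of the clopen conditions that $\hz$ and $\hz'$ do not both have an $(n+1)$-cut at $i$ (whether a point carries an $(n+1)$-cut at position $i$ is determined by its row $n+1$ at $i$, hence is clopen). For invariance, $\sigma$ acts on each row as the shift, so $\sigma \times \sigma$ shifts both row-$n$ sequences, and both families of $(n+1)$-cut positions, by one and the same amount; thus both defining conditions are preserved, and the same holds under $\sigma^{-1} \times \sigma^{-1}$.

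I would then push $R$ forward by the first projection $p(\hz,\hz') := \hz$. Since $R$ is closed in the compact space $\hX_f \times \hX_f$, the image $p(R)$ is compact, hence closed; it is nonempty; and since $p \circ (\sigma \times \sigma) = \sigma \circ p$ and $R$ is $\sigma \times \sigma$-invariant, $p(R)$ is $\sigma$-invariant. Because $(\hX_f,\sigma)$ is minimal, its only nonempty closed invariant subset is $\hX_f$, whence $p(R) = \hX_f$. In particular $\hy \in p(R)$, which produces the required $\hy' \in \hX_f$ with $\hy[n] = \hy'[n]$ that shares no $(n+1)$-cut position with $\hy$.

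It remains to read off the depth statement. The equality $\hy[n] = \hy'[n]$ makes $(\hy,\hy')$ $n$-compatible. If it failed to be $(n+1)$-separated, then $\hy[n+1] = \hy'[n+1]$, hence $\hy|_{n+1} = \hy'|_{n+1}$ and the two points would carry exactly the same $(n+1)$-cuts; but minimality forces the orbit of $\hy$ to meet $U(v_{n+1,0})$, so $\hy$ has an $(n+1)$-cut, which would then be common, contradicting $(\hy,\hy') \in R$. Hence $\hy[n+1] \ne \hy'[n+1]$ and $(\hy,\hy')$ has depth exactly $n$. The only step calling for real care is the closedness of the ``no common cut'' condition; the genuine engine of the argument is simply minimality of the natural extension, which upgrades one seed pair to one over every point.
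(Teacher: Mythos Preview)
Your proof is correct and is essentially the paper's argument recast in invariant-set language: the paper picks, by minimality, a sequence $n_k$ with $\sigma^{n_k}(\hx_n)\to\hy$, passes to a subsequence so that $\sigma^{n_k}(\hx'_n)\to\hy'$, and then checks directly that the two conditions (equal $n$th row, no common $(n+1)$-cut) survive in the limit---which is precisely your closedness verification for $R$. Both proofs rest on the same three ingredients (shift-invariance of the two conditions, their closedness, and minimality of $(\hX_f,\sigma)$), so the difference is purely one of packaging.
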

\begin{proof}
By minimality of $(\hX_f,\sigma)$, there exists a sequence $n_1 < n_2 < \dotsb$ such that
 $\lim_{k \to \infty}\sigma^{n_k}(\hx_n) = \hy$.
Taking a subsequence if necessary,
 we also get $\lim_{k \to \infty}\sigma^{n_k}(\hx'_n) = \hy'$ for some $\hy' \in \hX_f$.
Take an $N > n$ arbitrarily.
We get $\pi_N(\sigma^{n_k}(\hx_n)) = \sigma^{n_k}((\hx_n[N])) \to \hy[N]$
 and $\pi_N(\sigma^{n_k}(\hx'_n)) = \sigma^{n_k}((\hx'_n)[N]) \to \hy'[N]$
 as $k \to \infty$.
Because $\sigma^k(\hx)[n] = \sigma^k(\hx')[n]$ for all $k \bi$,
 we get $\hy[n] = \hy'[n]$.
Note that this does not directly imply that the depth of the pair $(\hy,\hy')$ is $n$.
Nevertheless, by the same reason, it follows that $(\hy,\hy')$ has no common $(n+1)$-cut.
Therefore, $(\hy,\hy')$ is $(n+1)$-separated, and $(\hy,\hy')$ has depth $n$.
\end{proof}
Fix some $n_0 \ge 1$ and let $m = n_0 + K^{K+1}$.
For $n \in [n_0,m-1]$, fix a pair $(\hx_n,\hx'_n)$ of depth $n$.
Fix $\hy_0 \in X$.
By Lemma \ref{lem:translation},
 we can take pairs $(\hy_0, \hy_n)$ with $n \in [n_0,m-1]$
 such that $\hy_0[n] = \hy_n[n]$ and has no common $(n+1)$-cut.
Then, for any $n,n' \in [i_0,m-1]$, $n' > n$,
 the pair $(\hy_{n},\hy_{n'})$ has depth $n$ and has no common $(n+1)$-cut.
In this way, we have found $K^{K+1}+1$ number of $n_0$-compatible and pairwise
 $m$-separated elements $\hy_0,\hy_{i_0},\hy_{i_0+1},\dotsc,\hy_{m-1}$ with
 no common $m$-cuts.

The next lemma is said to be the infection lemma in \cite{DM}, with which
 we can show that $X_{n_0}$ is periodic.
Then, because $n_0$ is arbitrary, we can conclude that
 $(\hX_f,\sigma)$ is an odometer system.
\begin{lem}[Downarowicz and Maass, \cite{DM}]\label{lem:infection}
If there exist at least $K^{K+1}+1$ $n$-compatible points
 $\hy_k~(k \in [1,K^{K+1}+1])$, which, for some $m > n$, are pairwise
 $m$-separated with no common $m$-cuts, then $X_n$ is periodic. 
\end{lem}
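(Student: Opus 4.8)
The plan is to reduce the statement to the periodicity of a single bi-infinite word and then run the infection argument of \cite{DM}, checking the two places where it used invertibility or the Bratteli tower structure. Since the points $\hy_k$ $(1 \le k \le K^{K+1}+1)$ are pairwise $n$-compatible, they all carry the same level-$n$ name $\omega := \hy_1[n] \in X_n$ and hence share every $n$-cut; it therefore suffices to prove that $\omega$ is a periodic point of $(X_n,\sigma)$, because $X_n$ is a factor of the minimal system $(\hX_f,\sigma)$ and a minimal subshift with a periodic point is a single periodic orbit. Each $\hy_k$ groups the common $n$-name into $m$-symbols, recorded by its set $D_k$ of $m$-cut positions, a nonempty subset of the $n$-cut positions of $\omega$; the hypothesis that the $\hy_k$ are pairwise $m$-separated with no common $m$-cut says precisely that the sets $D_k$ are pairwise disjoint. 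After telescoping we may assume that every level carries exactly $K$ circuits (possible since $\liminf_{n} l_n = K$), so there are exactly $K$ distinct $m$-symbols; moreover, by the normalization $a(n,i,1) = 1$ of Definition \ref{defn:GM-covering}, the level-$n$ projection of every $m$-symbol begins with the distinguished circuit $c_{n,1}$, so every position in every $D_k$ carries the circuit $c_{n,1}$ in $\omega$.

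The heart of the matter is the \emph{infection} device of \cite{DM}, which I would transcribe as follows. To each point $\hy_k$ one attaches, at a reference position, a local \emph{state} $s_k$ recording the $m$-symbol currently being traversed together with the residual data needed to continue the factorization of $\omega$; \cite{DM} bound the number of such states by $K^{K+1}$. The content of the device is that, when $\omega$ is aperiodic, the state $s_k$ propagates deterministically and unambiguously, so that $s_k$ determines the entire cut set $D_k$. Propagation to the right is supplied by the forward determinism of $f$ together with condition (c) of Definition \ref{defn:GM-covering} --- once two circuits meet they remain merged --- which plays here the role that the bijective tower maps play in the Bratteli--Vershik setting of \cite{DM}; propagation to the left is supplied by the invertibility of $\sigma$, which is available precisely because we have passed to the natural extension $(\hX_f,\sigma)$.

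With this in hand the count $K^{K+1}$ finishes the argument. Suppose, for contradiction, that $\omega$ is not periodic. Then the state-to-cut-set assignment $s_k \mapsto D_k$ is well defined by the deterministic two-directional propagation just described, and its domain has at most $K^{K+1}$ elements. Since we are given $K^{K+1}+1$ points, two of them, say $\hy_a$ and $\hy_b$, share a state and therefore have $D_a = D_b$; as both cut sets are nonempty, this contradicts $D_a \cap D_b = \emptyset$. Hence $\omega$ is periodic, and by minimality of $X_n$ we conclude that $X_n$ is a single periodic orbit, as claimed.

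I expect the main obstacle to be exactly the transfer of the propagation step from homeomorphisms to continuous surjections. In \cite{DM} the infection rides on the invertible tower maps of a properly ordered Bratteli diagram, whereas here the bonding maps $\fai_n$ are only \pdirectional\ edge-surjective covers. The work is to confirm that forward determinism of $f$ and the merging condition (c) jointly give unambiguous rightward propagation of the state, while invertibility of $\sigma$ on the natural extension gives the leftward propagation, so that the bounded-state pigeonhole of \cite{DM} applies without change and yields the periodicity of $X_n$.
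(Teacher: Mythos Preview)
Your proposal departs from the paper's (and in fact from \cite{DM}'s own) argument, and the departure introduces a genuine gap. The proof in the paper is not a state-machine/pigeonhole on $K^{K+1}$ ``states''; it is an iterated combinatorial periodicity argument. One fixes a position $k_0$, uses pigeonhole once to find $K^K+1$ of the $\hy_k$ whose $m$-symbol over $k_0$ is the same $c$; the several overlapping copies of $c$ force a local period $l_c$ in the common $n$-name $\hy$. If this periodicity breaks at some $k_1$, another pigeonhole yields $K^{K-1}+1$ points sharing a symbol $c'$ over $k_1$, and a Fine--Wilf--type computation with $l_c$ and $l_{c'}$ forces $l_{c'}<l_c$ (else a contradiction). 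One iterates; since periods strictly decrease and there are only $K$ symbols, the process terminates in at most $K$ rounds with a contradiction, hence $\hy$ is periodic. The exponent in $K^{K+1}$ counts these rounds of pigeonhole, not the size of any state space.

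Your argument, by contrast, asserts that a ``state'' $s_k$ of each $\hy_k$ at a reference position both (i) lies in a set of cardinality at most $K^{K+1}$ and (ii) determines the entire cut set $D_k$ by deterministic two-sided propagation along $\omega$. Neither claim is substantiated. For (ii), consider what happens when the current $m$-symbol of $\hy_k$ ends: which $m$-symbol comes next is governed by $\hy_k[m']$ for $m'>m$, not by $\omega=\hy_k[n]$ together with any level-$m$ datum; indeed the very hypothesis supplies $K^{K+1}+1$ distinct parsings of the \emph{same} $\omega$ into $m$-symbols, so no finite state read off $\omega$ can propagate to a unique $D_k$. For (i), the natural candidates for a ``state'' (the current $m$-symbol and the position inside it) have cardinality bounded by $K$ times the maximal circuit length at level $m$, which is not controlled by $K^{K+1}$. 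Condition (c) of Definition~\ref{defn:GM-covering} concerns merging of circuits \emph{within} a fixed level and says nothing about how the level-$m$ parsing continues past an $m$-cut; it does not play the role you assign it. In short, the infection device of \cite{DM} is the overlapping-period argument sketched above, not a bounded-state automaton, and your pigeonhole on states does not close.
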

\begin{proof}
As in the proof in \cite{DM}, denote by $\hy$ the common image of the points
 $\hy_k$ in $X_n$.
Think of the sequences $\hy$ and $\hy_k[m]$'s.
Let us fix a position $k_0$.
Then, for at least $K^{K}+1$ times,
 the lines $\hy_k[m]$ overlap with the same $m$-symbol.
This $m$-symbol is denoted by $c$.
Because, $c$ overlaps at least twice, we can find a periodicity in $c[n]$,
 i.e., there exists an $l > 0$ such that $(c[n])(i + l) = (c[n])(i)$ for
 $0 \le i < l(c) - l$.
Let $l_c$ be the minimum of such number.
Thus, we have the periodicity in $\hy$ around $k_0$ of length at least $l(c)$.
Suppose that this periodicity continues to the right end.
Then, $\hy$ is eventually periodic, and by minimality, $X_n$ turns out to be 
 a periodic orbit as required.
%
%
Therefore, suppose that this periodicity is finitely limited and broken
 for the first time at $k_1$.
We have chosen at least $K^K+1$ lines already, and we get rid of the rest.
These overlap with the same $m$-symbol $c$ at the position $k_0$.
At the position $k_1$, one can get at least $K^{K-1}+1$ lines $y_k[m]$
 with the same $m$-symbol $c'$ overlapping at $k_1$.
Suppose that $c' = c$.
Then, because at least two lines overlaps, at least one line is slided left
 more than $l_c$ from the position $k_1$.
This shows that $\hy(k_1-l_c) = \hy(k_1)$, contradicting the limited periodicity.
Thus, suppose that $c' \ne c$.
%
%
We have the two cases: (a) $l_{c'} \ge l_{c}$, and (b) $l_{c'} < l_c$.
%
%
Suppose that the case (a) occurred.
The lines overlapping at $k_1$ have the same symbol $c$ over $k_0$.
Thus, the left end of $c'$ that occurs over $k_1$, does not exceed $k_0$.
Now, because we have at least $3$ lines overlapping at $k_1$,
 we have two $c'$s slided left from $k_1$.
Therefore, the interval $[k_1-2l_{c'}-l_c,k_1-1]$ has periodicity $l_c$.
Furthermore, for $0 < s \le l_c$, we get $\hy(k_1-s) = \hy(k_1-s-l_{c'})$.
If $l_{c'} = l_{c}$, then the overlapping of $c'$ on $k_1$ implies
 the periodicity $\hy(k_1-l_c) = \hy(k_1)$.
This contradicts the limited periodicity.
Therefore, we suppose that $l_{c'} > l_c$.
Then, $\hy(k_1-l_{c'}+a l_c)$ with $a = 0,1,2,\dotsc$ have the same symbol
 with $\hy(k_1)$ while $-l_{c'}+a l_c < 0$.
Because $l_{c'} > l_{c}$, there exists an $0 < s_1 \le l_c$ and an $a_1 > 0$ such that
 $k_1- s_1 = k_1-l_{c'}+a_1 l_c$,
 and we get $\hy(k_1-s_1) = \hy(k_1-l_{c'}+a_1 l_c) = \hy(k_1)$.
It follows that $k_1 - s_1-l_{c'} \in [k_1-2l_{c'}-l_c,k_1-1]$.
Then, we get $\hy(k_1-s_1) = \hy(k_1-s_1-l_{c'}) = \hy(k_1-2l_{c'}+a_1 l_c)$.
Again,
 we get $\hy(k_1-s_2) = \hy(k_1-s_1-l_{c'}) = \hy(k_1 -2l_{c'}+a_2 l_c) = \hy(k_1)$
 for some $0 < s_2 \le l_c$ and $a_2 > 0$.
In this way, we get a sequence $0 < s_i \le l_c$ and $0 < a_i$
 with $1 \le i < \infty$
 such that $\hy(k_1-s_i) = \hy(k_1-i l_{c'} +a_i l_c) = \hy(k_1)$
 for all $1 \le i < \infty$.
Because $\gcd(l_c,l_{c'}) = s_i$ for some $i > 0$, we also get
 $l_c = s_j$ for some $j > 0$.
Therefore, we have $\hy(k_1-l_c) = \hy(k_1)$,
 contradicting the limited periodicity.
Therefore, if we have case (a), then we have a contradiction.
Suppose that the case (b) occurred.
Then, we restart the argument from the position $k_1$, and continue 
 in the same way with $K^{K-1}+1$ lines.
If the periodicity is right limited, we shall encounter only the case (b).
Further, because the period of the periodicity decreases by the condition (b),
 the right end is covered with an $m$-symbol that has not appeared
 previously.
At the last step, there exists only one symbol left, and there remains
 at least $K+1$ lines.
Therefore, case (a) must happen, the final contradiction.

We have proved that the right end is not limited.
This concludes the proof.
 
\end{proof}
Because the infection lemma is shown, the theorem is proved.
\qed

\begin{rem}
If the natural extension of a Cantor minimal system of finite topological rank has 
 finite topological rank, then our main result becomes a waste of effort.
Nevertheless, we could not answer this question.
\end{rem}
In the rest of this section, we shall show the next theorem
 that gives a partial answer affirmatively.
\begin{thm}\label{thm:kr-extension}
Let $(X,f)$ be a Cantor minimal system with topological rank $1 \le K \le \infty$.
The natural extension of $(X,f)$ is denoted by $(\hX_f,\sigma)$.
Then, there exists a Cantor minimal system of Kakutani--Rohlin type $(X',f')$
 with {\rm KR}-rank $\le K$, such that
 the natural extension $(\hX'_{f'},\sigma)$ is naturally topologically conjugate
 to $(\hX_f,\sigma)$.
Specifically, as a Cantor minimal system of Kakutani--Rohlin type, $(X',f')$ is 
 1-1 except one point $x_0 \in X'$ with which $\hash f'^{-1}(x_0) \le K$.
\end{thm}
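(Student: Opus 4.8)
The plan is to start from a simple GM-covering that realizes the topological rank and to ``un-merge'' its circuits into disjoint figure-8 loops, keeping the combinatorial decomposition data untouched. Concretely, represent $(X,f)$ as the inverse limit of a simple GM-covering $G_0 \getsby{\fai_0} G_1 \getsby{\fai_1} \dotsb$ of rank $K$, and write $\fai_n(c_{n+1,i}) = c_{n,a(n,i,1)}\dotsb c_{n,a(n,i,k(n,i))}$ with $a(n,i,1) = 1$. Since the topological rank is $K = \liminf_n l_n$, we may telescope to a subsequence on which $l_n = K$ for every $n$; telescoping preserves the inverse limit, hence $(X,f)$ and its natural extension, up to topological conjugacy. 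I would then build a new covering $G'_0 \getsby{\fai'_0} G'_1 \getsby{\fai'_1} \dotsb$ in which each $G'_n$ is the generalized figure-8 whose circuits $c'_{n,i}$ $(1 \le i \le l_n)$ have the same lengths $l(n,i)$ as before but are pairwise disjoint except at the central vertex, and define $\fai'_n$ on each $c'_{n+1,i}$ to trace out the walk $c'_{n,a(n,i,1)}\dotsb c'_{n,a(n,i,k(n,i))}$ prescribed by the very same data $a(n,i,j)$.

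First I would check that this is a legitimate simple KR-covering of rank $K$. On a figure-8 the only vertex with several outgoing edges is the centre $v'_{n+1,0}$, and the convention $a(n,i,1)=1$ forces all of its outgoing edges to be sent to the initial edge of $c'_{n,1}$, so $\fai'_n$ is \pdirectional; edge-surjectivity and simplicity transfer verbatim from the GM-covering because they concern only which circuits $c_{n,j}$ occur in the expansions, and those occurrences are encoded in $a(n,i,j)$. By Proposition~\ref{prop:minimal} the inverse limit $(X',f')$ is minimal, and it is a Cantor system because it shares its symbolic circuit structure with $(X,f)$; moreover a KR-covering is in particular a simple GM-covering (condition (c) of Definition~\ref{defn:GM-covering} being vacuous for a figure-8), so the whole linked-array-system machinery applies to $(X',f')$ as well.

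The heart of the argument is to identify the two natural extensions. Both $(\hX_f,\sigma)$ and $(\hX'_{f'},\sigma)$ are topologically conjugate to their linked array systems $\barX_f$ and $\barX'_{f'}$, and $\barX_f$ is the inverse limit of the two-sided subshifts $X_n = \seb \hx[n] \sen$ over the circuit alphabet $\seb c_{n,1},\dotsc,c_{n,l_n} \sen$, taken along the level-change factor maps. These factor maps are exactly the substitutions $c_{n+1,i} \mapsto c_{n,a(n,i,1)}\dotsb c_{n,a(n,i,k(n,i))}$, and an admissible two-sided circuit sequence at level $n$ is precisely one that parses, coherently and for every higher level, into circuits according to the same substitutions---an $(n{+}1)$-cut forcing an $n$-cut via $\fai_n(v_{n+1,0}) = v_{n,0}$. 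Since this data is shared between the two coverings, $X_n = X'_n$ with identical factor maps, whence $\barX_f = \barX'_{f'}$ and the natural extensions are naturally topologically conjugate through $\hX_f \cong \barX_f = \barX'_{f'} \cong \hX'_{f'}$. I expect this identification to be the main obstacle: one must verify carefully that the admissible circuit sequences and the cut structure are governed solely by the decomposition data $a(n,i,j)$, so that the forward non-injectivity created by merging in the GM-covering---which is invisible once one records bi-infinite circuit sequences---does not affect $X_n$.

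Finally, applying Theorem~\ref{thm:kr-covering} to the KR-covering of rank $K$ yields a point $x_0 \in X'$ with $\hash f'^{-1}(x) = 1$ for all $x \ne x_0$ and $\hash f'^{-1}(x_0) \le K$, which is the asserted ``1-1 except one point'' property and completes the proof.
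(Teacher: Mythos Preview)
Your proposal is correct and follows essentially the same approach as the paper: un-merge the GM-covering's circuits into disjoint figure-8 loops while retaining the identical decomposition data $a(n,i,j)$, then observe that the linked array systems of the two coverings coincide, whence the natural extensions are topologically conjugate. Your explicit preliminary telescoping to force $l_n = K$ is a useful refinement the paper glosses over---it is in fact needed, since KR-rank is defined via $\limsup_n l_n$ whereas the GM-rank realizing $K$ only controls $\liminf_n l_n$.
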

\begin{proof}
Let 
 $\Gcal : G_0 \getsby{\fai_0} G_1 \getsby{\fai_1} G_2 \getsby{\fai_2} \dotsb$
 be a simple GM-covering of rank $1 \le K \le \infty$ such that
 the inverse limit is topologically conjugate to $(X,f)$.
As a GM-covering, every $G_n$ $(n > 0)$ consists of circuits
 $\seb c_{n,1}, c_{n,2}, \dotsc, c_{n,l_n} \sen$ such that the central vertex is
 $v_{n,0}$.
Then, as in Notation \ref{nota:natural-extension}, we can define its natural extension
 $(\hX_f,\sigma)$, the array system and the linked array system.
For each $n > 0$, we shall construct another directed graph $G'_n$ and cover maps
 $\fai'_n : G'_{n+1} \to G'_n$.
Let $c_{n,i}$ $(1 \le i \le l_n)$ be circuits of $G_n$; and $v_{n,0}$,
 its central vertex.
Then, as in the remark after the definition of GM-covering, we can write as:
\[\fai_n(c_{n+1,i}) = c_{n,a(n,i,1)}c_{n,a(n,i,2)}\dotsb c_{n,a(n,i,k(n,i))}\]
for each $n \ge 0$ and each $1 \le i \le l_{n+1}$,
 where $a(n,i,1) = 1$ for all $1 \le i \le l_{n+1}$.
Then, we can write
 $c_{n,i} = (v_{n,0}=v_{n,i,0},v_{n,i,1},v_{n,i,2},\dotsc,v_{n,i,l(n,i)} = v_{n,0})$.
It may happen that $v_{n,i,j} = v_{n,i',j'}$ for some $i \ne i'$, $1 \le j < l(n,i)$
 and $1 \le j' < l(n,i')$.
Nevertheless,
 for the construction of $G'_n$,
 we make distinct vertices $v'_{n,i,j}$ and $v'_{n,i',j'}$.
Therefore, we make use of $1+\sum_{1 \le i \le l_n}(l(n,i)-1)$ vertices
\[\seb v'_{n,0} \sen \cup \seb v'_{n,i,j} \mid 1 \le i \le l_n, 1 \le j < l(n,i) \sen.\]
As in $G_n$, we assume that $v'_{n,0} = v'_{n,i,0} = v'_{n,i,l(n,i)}$.
The set of edges is defined as:
\[\seb (v'_{n,i,j},v'_{n,i,j+1}) \mid 1 \le i \le l_n \myand 0 \le j < l(n,i)\sen.\]
Thus, we constructed a new directed graph $G'_n$ that consists of circuits $c'_{n,i}$
 corresponding to $c_{n,i}$ for each $1 \le i \le l_n$.
The graph homomorphisms $\fai'_n : G'_{n+1} \to G'_n$ is defined as:
\itemb
\item $\fai'_n(v'_{n+1,0}) = v'_{n,0}$,
\item $\fai'_n(c'_{n+1,i}) =  c'_{n,a(n,i,1)}c'_{n,a(n,i,2)}\dotsb c'_{n,a(n,i,k(n,i))}$
 for each $1 \le i \le l_{n+1}$.
\itemn
Because $a(n,i,1) = 1$ for all $1 \le i \le l_n$, it is evident that the sequence
 $\Gcal' : G'_0 \getsby{\fai'_0} G'_1 \getsby{\fai'_1} G'_2 \getsby{\fai'_2} \dotsb$
 is a graph covering.
From the construction, this covering is a KR-covering.
We can get its inverse limit $G'_{\infty} = (X',f')$.
It is evident that $(X',f')$ has KR-rank $\le K$.
It is also obvious that $(X',f')$ has a natural factor map onto $(X,f)$.
Then, as in Notation \ref{nota:natural-extension}, we can define its natural extension
 $(\hX'_{f'},\sigma)$, the array system and the linked array system.
Note that the linked array systems of $\Gcal$ and $\Gcal'$ coincide.
As have been shown, the linked array systems have not lost any information of 
 the natural extensions.
Thus, it follows that $(\hX_f,\sigma)$ and $(\hX'_{f'},\sigma)$
 are naturally topologically conjugate.
\end{proof}

\begin{cor}\label{cor:coincidence-rank}
For a Cantor minimal homeomorphism, the topological rank that is defined by Bratteli diagrams and the topological rank that is defined by GM-coverings coincide.
\end{cor}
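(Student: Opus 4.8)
The plan is to establish the two inequalities $K_{\mathrm{GM}} \le K_{\mathrm{DM}}$ and $K_{\mathrm{DM}} \le K_{\mathrm{GM}}$ between the Bratteli (Downarowicz--Maass) rank $K_{\mathrm{DM}}$ and the GM rank $K_{\mathrm{GM}}$ of a fixed Cantor minimal homeomorphism $(X,f)$. The starting observation, which makes one direction nearly immediate, is that \emph{every} KR-covering is in particular a GM-covering: conditions (a)--(e) of Definition \ref{defn:GM-covering} all hold for a generalized figure-8, condition (c) holding vacuously because distinct circuits meet only at the central vertex $v_{n,0}$, and condition (d) being exactly the defining property of a KR-covering. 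The number $l_n$ of circuits of $G_n$ is the same whether $G_n$ is viewed as a figure-8 or as a GM-graph, and for figure-8s the KR-simplicity condition and the GM-simplicity condition coincide, since a circuit of $G_m$ covers all of $V(G_n)$ if and only if it covers all of $E(G_n)$. Hence a simple KR-covering is a simple GM-covering of the same rank.

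For the inequality $K_{\mathrm{GM}} \le K_{\mathrm{DM}}$, I would start from a properly ordered Bratteli diagram representing $(X,f)$ with $\liminf_{n}\hash V_n = K_{\mathrm{DM}}$. By Remark \ref{rem:a-link} this diagram yields a KR-covering in which the circuits of $G_n$ correspond bijectively to the vertices of $V_n$, so $l_n = \hash V_n$. Its inverse limit is the Bratteli--Vershik system, which is minimal, so Proposition \ref{prop:minimal} shows the KR-covering is simple; by the first paragraph it is then a simple GM-covering with $\liminf_n l_n = \liminf_n \hash V_n = K_{\mathrm{DM}}$. Therefore $K_{\mathrm{GM}} \le K_{\mathrm{DM}}$.

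The reverse inequality $K_{\mathrm{DM}} \le K_{\mathrm{GM}}$ is where Theorem \ref{thm:kr-extension} is used. Because $f$ is a homeomorphism, $(\hX_f,\sigma)$ is topologically conjugate to $(X,f)$ itself. Theorem \ref{thm:kr-extension} produces a Cantor minimal system of Kakutani--Rohlin type $(X',f')$ with KR-rank $\le K_{\mathrm{GM}}$, i.e. $\limsup_n l'_n \le K_{\mathrm{GM}}$, whose natural extension is conjugate to $(\hX_f,\sigma) \cong (X,f)$. By Remark \ref{rem:a-link} the KR-covering of $(X',f')$ is linked to an ordered Bratteli diagram with $\hash V_n = l'_n$, and the Vershik dynamics on its path space should realize precisely the natural extension of $(X',f')$, hence a minimal homeomorphism conjugate to $(X,f)$. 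This exhibits a Bratteli--Vershik representation of $(X,f)$ with $\liminf_n \hash V_n = \liminf_n l'_n \le \limsup_n l'_n \le K_{\mathrm{GM}}$, giving $K_{\mathrm{DM}} \le K_{\mathrm{GM}}$. Note that the interplay of $\liminf$ (used for the Bratteli and GM ranks) and $\limsup$ (used for the KR rank) works in our favour in both directions.

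The hard part will be the last identification in the third paragraph: verifying that the ordered Bratteli diagram linked to the KR-covering of $(X',f')$ is genuinely \emph{properly ordered} (simple, with a unique maximal and a unique minimal path) and that its Vershik map coincides, under the conjugacy, with $\sigma$ on the natural extension. Simplicity is inherited from minimality, but the unique maximal/minimal path condition requires care: it amounts to checking that the single non-injective fibre of $f'$ over $x_0'$ (Theorem \ref{thm:kr-covering}) is exactly what gets resolved by passing from the one-sided inverse-limit dynamics to the two-sided path-space dynamics. This is precisely the step at which the homeomorphism hypothesis on $(X,f)$ is essential, and it is the point I would treat most carefully.
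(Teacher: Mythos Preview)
Your plan matches the paper's. The ``hard part'' you flag is exactly where the paper's argument concentrates: it resolves the properly-ordered question by asserting that the KR-covering from Theorem \ref{thm:kr-extension} has inverse limit conjugate to $(X,f)$ itself, and then telescopes this covering to become \bidirectional. Bidirectionality yields a unique maximal infinite path in the associated ordered Bratteli diagram; the unique minimal path is evident from the \pdirectional\ condition; and simplicity follows from minimality. Thus the diagram is properly ordered and its Bratteli--Vershik system is $(X,f)$ directly, so no separate identification of the Vershik map with $\sigma$ on $\hX'_{f'}$ is needed.

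The one idea you are missing (and which the paper asserts rather than proves) is why the inverse limit $(X',f')$ of the KR-covering is already conjugate to $(X,f)$, rather than merely having the same natural extension. The construction in Theorem \ref{thm:kr-extension} supplies, besides the equality of natural extensions, a factor map $\beta:(X',f')\to(X,f)$ induced by the graph maps $G'_n\to G_n$ collapsing each figure-8 onto the corresponding GM-graph. Chasing the linked-array conjugacies $(X,f)\cong(\hX_f,\sigma)\cong(\hX'_{f'},\sigma)$ shows that $\beta$ composed with the zeroth-coordinate projection $\hX'_{f'}\to X'$ is the identity on $X$; hence that projection is injective, $(X',f')\cong(X,f)$, and $f'$ is a homeomorphism. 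For a KR-covering with homeomorphic inverse limit, $\hash f'^{-1}(x_0)=1$ by Theorem \ref{thm:kr-covering}, and at the graph level this says that for each $n$ some $m>n$ has all circuits of $G'_m$ ending (under $\fai'_{m,n}$) with the same circuit of $G'_n$---precisely bidirectionality of $\fai'_{m,n}$.
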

\begin{proof}
Let $(X,f)$ be a Cantor minimal homeomorphism such that the topological rank with regard
 to the Bratteli diagram representation is $1 \le K \le \infty$.
Then, there exists a properly ordered Bratteli diagram with rank $K$.
Then, as described in Remark \ref{rem:a-link}, $(X,f)$ is an inverse limit of
 a KR-covering with rank $K$.
It is evident that a KR-covering of rank $K$ is a GM-covering of rank $K$.
Therefore, $(X,f)$ has a topological rank $\le K$ with regard to GM-coverings.
On the other hand, suppose that there exists a GM-covering
 with rank $1 \le K' \le \infty$ such that
 the inverse limit is topologically conjugate to $(X,f)$.
Then, there exists a KR-covering with rank $K'$ such that the inverse limit
 is topological conjugate to the natural extension of $(X,f)$, i.e., $(X,f)$ itself.
By telescoping, this KR-covering becomes bidirectional.
Therefore, the corresponding ordered Bratteli diagram has unique maximal infinite path.
The uniqueness of minimal infinite path is evident.
The simplicity of the Bratteli diagram comes from the minimality of the Cantor system.
Therefore, this Bratteli diagram is properly ordered and has rank $\le K'$.
This concludes the coincidence of two definitions of the topological rank.
\end{proof}

%
%
%

\end{document}